\definecolor{darkgreen}{rgb}{0.0, 0.7, 0.0}
\definecolor{purple}{rgb}{0.5, 0.0, 0.5}
\definecolor{red}{rgb}{0.8, 0.2, 0.0}
\newtheorem{thm}{Theorem}[section]
\newtheorem{bthm}{Theorem}
\newtheorem{lemma}[thm]{Lemma}
\numberwithin{equation}{section}
\theoremstyle{definition}
\newtheorem{defi}[thm]{Definition}
\theoremstyle{remark}
\newcommand{\Z}{\mathbb{Z}}
\newcommand{\Q}{\mathbb{Q}}
\newcommand{\Pic}{\operatorname{Pic}}
\DeclareMathOperator{\Ext}{{Ext}}
\def \P{\mathbb{P}}
\def \F{\mathcal F}
\def\I{{\mathcal J}}
\def \E{\mathcal E}
\def \G{\mathcal G}
\def\O{\mathcal O}
\def\M0{\mathcal M^0}
\DeclareMathOperator{\Sing}{{Sing}}
\begin{document}

\title[Non-existence of low rank Ulrich bundles on Veronese varieties]{Non-existence of low rank Ulrich bundles on Veronese varieties}

\author[A.F. Lopez, D. Raychaudhury]{Angelo Felice Lopez and Debaditya Raychaudhury}

\address{\hskip -.43cm Angelo Felice Lopez, Dipartimento di Matematica e Fisica, Universit\`a di Roma
Tre, Largo San Leonardo Murialdo 1, 00146, Roma, Italy. e-mail {\tt angelo.lopez@uniroma3.it}}

\address{\hskip -.43cm Debaditya Raychaudhury, Department of Mathematics, University of Arizona, 617 N Santa Rita Ave., Tucson, AZ 85721, USA. email: {\tt draychaudhury@math.arizona.edu}}

\thanks{The first author was partially supported by the GNSAGA group of INdAM and by the PRIN ``Advances in Moduli Theory and Birational Classification''.}

\thanks{{\it Mathematics Subject Classification} : Primary 14F06. Secondary 14J60, 14M10.}

\begin{abstract} 
We show that Veronese varieties of dimension $n \ge 4$ do not carry any Ulrich bundles of rank $r \le 3$. In order to prove this, we prove that a Veronese embedding of a complete intersection of dimension $m \ge 4$, which if $m=4$ is either $\P^4$ or has degree $d \ge 2$ and is very general and not of type $(2), (2,2)$, does not carry any Ulrich bundles of rank $r \le 3$.
\end{abstract}
\maketitle

\section{Introduction}

In the theory of vector bundles on a smooth irreducible variety $X \subset \P^N$, an open problem \cite{es} that has attracted attention lately is whether $X$ carries an Ulrich bundle $\E$, that is such that $H^i(\E(-p))=0$ for $i \ge 0, 1 \le p \le \dim X$. Once existence is proved, the next question is what is the Ulrich complexity of $(X,\O_X(1))$, namely the lowest possible rank of an Ulrich bundle. 

Perhaps one of the simplest but interesting cases  to be considered is when $X=\P^n$ and the embedding in $\P^N$ is given by $\O_{\P^n}(a)$, for some integer $a \ge 1$. Existence of Ulrich bundles is known for high rank: $n!$ by \cite[Thm.~3.1]{b} and $a^{\binom{n}{2}}$ by \cite[Cor.~5.7]{es}. As for lower rank, to set up the picture, let $\E$ be a rank $r$ Ulrich bundle for $(\P^n,\O_{\P^n}(a))$. If $a=1$ it is well-known \cite[Prop.~2.1]{es}, \cite[Thm.~2.3]{b} that $\E \cong \O_{\P^n}^{\oplus r}$. Also, if $n=1$ one easily sees that $\E \cong \O_{\P^1}(a-1)^{\oplus r}$. Things are different for $n \ge 2, a \ge 2$. First of all, one has that $r \ge 2$ and there are some strong numerical constraints, since (see for example \cite[Thm.~5.1]{es}) we have that
\begin{equation}
\label{cnec}
\chi(\E(\ell))=\frac{r}{n!}(\ell+a) \cdots (\ell+na) \in \Z \ \hbox{for every} \ \ell \in \Z.
\end{equation}
It follows by \cite[Cor.~5.3]{es} that if $p$ is any prime such that $p \vert a$ and $p^t \vert n!$, then $p^t \vert r$. 

The necessary condition \eqref{cnec} is easily translated into $2 \vert r(a-1)$ when $n=2$ and $6 \vert r(a^2-1)$ when $n=3$. If $n=2$, it follows by \cite[Thm.~1]{cmr1} (see also \cite[Thms.~6.1 and 6.2]{cg}) that it is in fact sufficient. If $n=3$, it was conjectured in \cite[Conj.~1.1]{cmr2} that it is again sufficient and this has been recently proved in \cite[Thm.~1]{fp}.

On the other hand, when $n \ge 4$, there seems to be an important difference. For example, consider the case $n=4$. We get by \cite[Cor.~5.3]{es} that $\gcd(a,6)=1$ for $r=2$ and $\gcd(a,2)=1$ for $r=3$. Similarly, if $n=5$, we find that $\gcd(a,30)=1$ for $r=2$ and $\gcd(a,10)=1$ for $r=3$. But assuming that the latter non-divisibility conditions on $a$ are satisfied, we have that \eqref{cnec} holds unconditionally.
 
Despite the fact that this seems to suggest that, when $n \ge 4$, Ulrich bundles of rank $2$ or $3$ might exist on Veronese varieties, we show that this is not the case. In fact we have:

\begin{bthm}
\label{Veronese}

Let $n \ge 4$ and $a \ge 2$. Then $(\P^n,\O_{\P^n}(a))$ does not carry Ulrich vector bundles of rank $r \le 3$.
\end{bthm}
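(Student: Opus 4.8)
The plan is to deduce the statement from a non-existence result for Ulrich bundles on Veronese re-embeddings of complete intersections, proved by induction on the dimension, the essential content sitting in the four-dimensional case. When $n=4$, $(\P^n,\O_{\P^n}(a))$ is the Veronese re-embedding of the complete intersection $\P^4$; when $n\ge 5$, it is that of the (trivial) complete intersection $\P^n$ of dimension $\ge 5$. So it suffices to rule out Ulrich bundles of rank $\le 3$ on $(Y,\O_Y(a))$ for $Y\subset\P^n$ a smooth complete intersection of dimension $m\ge 4$ which, when $m=4$, equals $\P^4$ or is very general of degree $\ge 2$ and not of type $(2)$ or $(2,2)$. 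For the inductive step, let $\E$ be Ulrich of rank $r$ on such a $(Y,\O_Y(a))$ with $m=\dim Y\ge 5$: cutting $Y$ with a general hyperplane of the ambient $\P^N$ of the Veronese re-embedding gives $Y\cap G$ for a general hypersurface $G\subset\P^n$ of degree $a$, again a Veronese re-embedded complete intersection of dimension $m-1\ge 4$, and $\E|_{Y\cap G}$ is again Ulrich of rank $r$ (the restriction of an Ulrich bundle to a general hyperplane section is Ulrich). Iterating brings the dimension down to $4$. The few pairs for which this descent lands on $Q^4$ or on the degree-$4$ del Pezzo fourfold — ultimately $(\P^5,\O(2))$ and $(\P^6,\O(2))$ — do not descend and must be handled directly, the feature $a=2$ (second Veronese) making this manageable; I expect this part to be routine.

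The heart of the matter is the base case $m=4$. Take $\E$ Ulrich of rank $r\le 3$ on $(\P^4,\O_{\P^4}(a))$, $a\ge 2$ (the argument for a general allowed fourfold being parallel). By \eqref{cnec} with $n=4$, $\chi(\E(\ell))=\frac r{24}(\ell+a)(\ell+2a)(\ell+3a)(\ell+4a)$, so Hirzebruch--Riemann--Roch determines $c_1(\E),\dots,c_4(\E)$ completely; being rational multiples of the hyperplane class they must be integral, which forces exactly the divisibility conditions on $a$ recalled in the introduction ($\gcd(a,6)=1$ if $r=2$, $a$ odd if $r=3$), in whose absence there is nothing to prove. (For a general fourfold $Y$ one moreover needs that $H^4(Y,\Z)$ has no algebraic class outside $\Z H^2$; this holds for very general $Y$, but fails for $Q^4$ and the $(2,2)$ fourfold, which contain extra linear subspaces — whence their exclusion.) If $r=1$, then $\E=\O_{\P^4}(\tfrac{5(a-1)}2)$, so $H^0(\E\otimes\O_{\P^4}(-a))\ne 0$ since $\tfrac{5(a-1)}2-a=\tfrac{3a-5}2\ge\tfrac12$, contradicting the Ulrich condition. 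If $r=2$, use the Serre correspondence: a general section of $\E$ vanishes along a smooth surface $Z\subset\P^4$ with $N_{Z/\P^4}=\E|_Z$, $\omega_Z=\O_Z(5a-10)$, and Koszul sequence $0\to\O_{\P^4}\to\E\to\mathcal{I}_Z\otimes\det\E\to 0$. Twisting by the Ulrich twists $\O_{\P^4}(-pa)$, $1\le p\le 4$, and using $H^{\bullet}(\E\otimes\O_{\P^4}(-pa))=0$, one extracts precise (non)vanishing for the cohomology of $\mathcal{I}_Z(\,\cdot\,)$ and hence of $\O_Z(\,\cdot\,)$ — in particular $Z$ lies on no hypersurface of degree up to about $4a-4$. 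Feeding this into the numerical invariants of $Z$ (its degree, canonical class, $\chi(\O_Z)$, sectional genus, all determined by $\E$) produces a surface in $\P^4$ whose invariants clash with surface geometry, the contradiction coming from Castelnuovo-type bounds on the Hilbert function together with inequalities of Noether and Bogomolov--Miyaoka--Yau type.

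For $r=3$ the same philosophy applies, but now a general section of $\E$ vanishes along a \emph{curve} $C\subset\P^4$; one runs the truncated Koszul resolution $0\to\O_{\P^4}\to\E\to\wedge^2\E\to\det\E\otimes\mathcal{I}_C\to 0$, extracts from the Ulrich twists that $C$ avoids low-degree hypersurfaces, and concludes via the (rather restrictive) Castelnuovo bounds for curves in $\P^4$ of the forced degree and genus. I expect the $r=3$ case to be the main obstacle: a rank-$3$ Ulrich bundle carries no usable self-duality (unlike rank $2$, where $\E^\vee\cong\E(-c_1)$), so the entire contradiction must be squeezed out of a single vanishing locus, and the delicate point is to keep the numerical estimates sharp for \emph{every} $a\ge 2$, not merely for $a$ large. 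Once the complete-intersection statement is established, the theorem follows with no further work.
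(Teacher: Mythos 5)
Your reduction to a non-existence statement for Veronese re-embeddings of complete intersections of dimension $\ge 4$ is exactly the paper's strategy (Theorem \ref{ci}), including the descent by hyperplane sections of $v_a(\P^n)\subset\P^N$ via Lemma \ref{ulr}(i) and the separate treatment of the two exceptional landing points $(\P^5,\O(2))$ and $(\P^6,\O(2))$ --- which are indeed routine, being killed by the divisibility constraint of \cite[Cor.~5.3]{es} ($2^3\mid 5!$ and $2^4\mid 6!$ force $8\mid r$, resp.\ $16\mid r$). One small point you gloss over: a general hyperplane section is not automatically \emph{very} general, which is what Noether--Lefschetz requires to get $H^4(X,\Z)_{\rm alg}=\Z H^2$; the paper bridges this by observing that the locus of complete intersections of type $(a,\dots,a)$ carrying a rank $r$ Ulrich bundle contains a nonempty open set, hence cannot avoid the very general locus. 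This is fixable, but it is a step you would need to supply.

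The genuine gap is the base case $m=4$, which is the entire content of the theorem, and which your proposal does not prove. You set up a codimension-$2$ zero locus $Z$ for $r=2$ (fine: this matches the paper's Ulrich subvariety and the sequence \eqref{est}), but the contradiction is asserted to come from ``Castelnuovo-type bounds \dots together with inequalities of Noether and Bogomolov--Miyaoka--Yau type'' without verifying that any such inequality is actually violated for the specific invariants at hand, uniformly in $a\ge 2$ and in the degree sequence $(d_1,\dots,d_s)$. The paper's contradiction is not an inequality of that kind at all: it is obtained by computing $\chi(\O_Z)$ twice --- once from Riemann--Roch on $X$ via \eqref{est}, once from Noether's \emph{formula} $\chi(\O_Z)=\frac1{12}(K_Z^2+c_2(Z))$ with $K_Z$, $c_2(Z)$ pinned down by Lemma \ref{zeta} --- and showing the discrepancy equals $\frac{d}{4320}v_{s,a,8}$, a polynomial proved strictly positive in Lemma \ref{cg}. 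That positivity verification is where all the work lies, and nothing in your sketch substitutes for it. For $r=3$ the situation is worse: you propose the zero locus of a single section, which is a \emph{curve} in a fourfold, whereas the paper's Ulrich subvariety is still a codimension-$2$ surface (built from an $(r-1)$-dimensional space of extension classes, not one section), with $K_Z^2$ recovered from the relation $[K_Z-\tfrac52(S-s+3a-5)H_Z]^2=0$ of \cite[Rmk.~4.3(ix)]{lr2}; your curve-plus-Castelnuovo route is both a different construction and entirely unexecuted, and you yourself flag it as the main obstacle. As it stands, the proposal reproduces the (comparatively easy) reduction but leaves the core numerical contradiction unproved.
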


The strategy to prove the above theorem is to consider the Veronese embedding $v_a(\P^n) \subset \P^N$, take hyperplane sections and use the fact that the restriction of an Ulrich bundle to the hyperplane section remains Ulrich. One then gets an Ulrich bundle on a Veronese embedding of a complete intersection of type $(a, \ldots, a)$ in $\P^n$. In order to handle these, we use Ulrich subvarieties (see Section \ref{due}) to show the following generalization of \cite[Thm.~2]{lr2}:

\begin{bthm} 
\label{ci}
Let $s \ge 1, a \ge 2, m \ge 4$ and let $X \subset \P^{m+s}$ be a smooth $m$-dimensional complete intersection of hypersurfaces of degrees $(d_1, \ldots, d_s)$ with $d_i \ge 1, 1 \le i \le s$ and degree $d$. Assume that one of the following holds:
\begin{itemize}
\item[(a)] $m \ge 5$, or
\item[(b)] $m =4$ and $d=1$, or 
\item[(c)] $m=4, d \ge 2$, $X$ is very general and
$(d_1, \ldots, d_s) \not\in \{(2, \underbrace{1, \ldots, 1}_\text{s-1}), (2, 2, \underbrace{1, \ldots, 1}_\text{s-2})\}$
(up to permu-

\vskip -.4cm \noindent tation). 
\end{itemize}
Then there are no rank $r \le 3$ Ulrich vector bundles with respect to $(X,\O_X(a))$. 
\end{bthm}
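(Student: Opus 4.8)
The plan is to argue by contradiction and by induction on $s$, reducing to the case of a Veronese embedding of a smooth complete intersection of one lower codimension inside $\P^{m+s}$, exactly as in the reduction used for Theorem~\ref{Veronese}. Suppose $\E$ is an Ulrich bundle of rank $r \le 3$ for $(X, \O_X(a))$ with $X \subset \P^{m+s}$ a complete intersection of type $(d_1,\dots,d_s)$. The key structural observation is that being Ulrich is stable under restriction to hyperplane sections: if $Y \subset X$ is a general hyperplane section (in the given projective embedding, i.e.\ the Veronese embedding $v_a$), then $\E|_Y$ is an Ulrich bundle of the same rank on $(Y, \O_Y(a))$, and $Y$ is again a smooth complete intersection, now of dimension $m-1$ inside $\P^{m+s-1}$ of type $(d_1,\dots,d_s)$ with one fewer ambient coordinate. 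Iterating, one lands on a complete intersection $X_0$ of dimension exactly $4$ (if we started with $m \ge 5$) carrying a rank $\le 3$ Ulrich bundle; the hypotheses (a)/(b)/(c) are designed precisely so that the resulting four-dimensional complete intersection still falls into one of the allowed cases, the delicate point being to check that genericity (case (c)) and the excluded types are preserved under taking general hyperplane sections, and that the cases $d=1$ (i.e.\ $\P^4$) and $d \ge 2$ do not get mixed in a bad way. So the real content is the base case $m=4$.

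For the base case I would use the theory of Ulrich subvarieties developed in Section~\ref{due} (the section labelled \ref{due}), following the method of \cite[Thm.~2]{lr2}. Given a rank $r \le 3$ Ulrich bundle $\E$ on the four-dimensional complete intersection $X$ embedded by $v_a$, one associates to it an Ulrich subvariety $Z \subset X$ — roughly, the degeneracy locus of a suitable section or map built from $\E$ — whose codimension is controlled by $r$. For $r=1$ the bundle is a line bundle and one gets a divisor; for $r=2,3$ one gets subvarieties of codimension $2$ or $3$, i.e.\ of dimension $2$ or $1$. One then extracts numerical constraints on $Z$: its class in the Chow ring of $X$, its degree with respect to $\O_X(1) = v_a^*\O(1)$, and the positivity/divisibility constraints coming from \eqref{cnec} (which on $X$ becomes a Hilbert-polynomial condition forcing strong congruences between $r$, $a$ and the $d_i$), together with adjunction on $Z$ (its canonical bundle, hence its arithmetic genus or its Chern numbers, are pinned down). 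The contradiction comes from showing that no such $Z$ can exist: either the numerical class forced on $Z$ is not effective, or an effective representative would violate a Bogomolov-type inequality, a Castelnuovo-type genus bound, or simply fails an integrality/parity check. This is where the excluded types $(2,1,\dots,1)$ and $(2,2,1,\dots,1)$ and the very-general hypothesis enter: for those low-degree intersections the would-be Ulrich subvariety is \emph{not} ruled out by the numerics, whereas for all other types the degree of $X$ is large enough that the genus or positivity bound is violated; very-generality is what lets us use, e.g., a Noether–Lefschetz-type statement to control $\operatorname{Pic}(X)$ or the effective cone and thereby kill the remaining classes.

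I would organize the write-up as: (1) the restriction lemma and the induction on $s$, reducing everything to $m=4$ and checking that the allowed cases are preserved; (2) recollection of the Ulrich subvariety $Z$ attached to $\E$, with its codimension, its class, and the adjunction formula for $K_Z$; (3) the numerical analysis for $r=1$ (easiest — a divisor whose degree and canonical class are over-determined), then $r=2$, then $r=3$, in each case deriving the contradiction from effectivity plus a genus/Bogomolov bound, handling the borderline low-degree and $\P^4$ cases by hand.

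The main obstacle I expect is step (3) for $r=3$ on a general complete intersection fourfold: there the Ulrich subvariety is a curve, and one must simultaneously control its degree, its arithmetic genus (via adjunction on a possibly singular or reducible curve), and the fact that it sits on $X$ in a prescribed cohomology class — ruling this out uniformly in $(d_1,\dots,d_s)$, while isolating exactly the exceptional types $(2)$ and $(2,2)$, is the combinatorially and geometrically heaviest part. A secondary difficulty is making the genericity hypothesis propagate correctly through the hyperplane-section induction so that case (c) is genuinely available at the bottom of the induction.
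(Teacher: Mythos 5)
Your overall skeleton (restrict to hyperplane sections to reach dimension $4$, then use an Ulrich subvariety and derive a numerical contradiction) matches the paper, but the proposal contains two concrete errors that would derail the argument. First, the hyperplane sections must be taken in the Veronese embedding $v_a(X)\subset\P^N$ for Lemma \ref{ulr}(i) to apply, and a hyperplane section of $v_a(X)$ corresponds to a \emph{degree-$a$ hypersurface} section of $X$ in $\P^{m+s}$, not to a hyperplane section of $X$. So the resulting variety is a complete intersection of type $(d_1,\ldots,d_s,a,\ldots,a)$ in $\P^{4+\tilde s}$ with $\tilde s=s+m-4$: the codimension \emph{increases} while the dimension drops to $4$. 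Your ``induction on $s$'' with $Y\subset\P^{m+s-1}$ of unchanged type $(d_1,\ldots,d_s)$ is therefore not what happens, and the bookkeeping of which types reach the base case (and hence why only $(2)$ and $(2,2)$ for $a=2$ survive as exceptions) depends on getting this right. Second, the Ulrich subvariety $Z$ of Theorem \ref{1-1} has pure codimension $2$ for \emph{every} rank $r\ge 2$; it is not a degeneracy locus of codimension $r$. On the fourfold, $Z$ is a surface both for $r=2$ and for $r=3$, so the ``curve for $r=3$'' analysis you flag as the main obstacle is not the actual situation.

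Beyond these, the mechanism of the contradiction is left too vague to be checkable. The paper does not use effectivity, Bogomolov inequalities, or Castelnuovo genus bounds: it computes $\chi(\O_Z)$ in two independent ways --- once from the exact sequence $0\to W^*\otimes\O_X\to\E\to\I_{Z/X}(D)\to 0$ together with the Koszul resolution of $X$ (giving the Hilbert-polynomial expression $f_{a,4,s,r,\ell}$), and once from Noether's formula $\chi(\O_Z)=\tfrac1{12}(K_Z^2+c_2(Z))$ using the adjunction data of Lemma \ref{zeta} (for $r=3$ one also needs $[K_Z-\tfrac52(S-s+3a-5)H_Z]^2=0$ and Riemann--Roch to pin down $K_ZH_Z$ and $K_Z^2$) --- and then shows the difference of the two expressions is a strictly positive symmetric function $d\cdot v_{s,a,b}(d_1,\ldots,d_s)$, a contradiction. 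The very-generality in case (c) enters not through $\Pic(X)$ or the effective cone but through Noether--Lefschetz in codimension $2$, to force $c_2(\E)=[Z]=eH^2$; the excluded types $(2)$ and $(2,2)$ are exactly those where this control on $H^4$ is unavailable. As written, your proposal would need all of these ingredients supplied before it could be completed.
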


\section{Preliminaries}

\subsection{Notation and conventions} \hskip 3cm

Throughout the paper we work over the complex numbers. 

We will use the convention $\binom{\ell}{m}=\frac{\ell (\ell-1)\ldots (\ell-m+1)}{m!} \ \mbox{for} \ m \ge 1, \ell \in \Z$.
Note that $\binom{-\ell}{m}=(-1)^m\binom{\ell+m-1}{m}$ and $\chi(\O_{\P^m}(\ell))=\binom{\ell+m}{m}$.

\subsection{Generalities on (Ulrich) vector bundles} \hskip 3cm

We will need the following statement on vanishing of cohomology.

\begin{lemma} 
\label{van}
Let $a \ge 1$ be an integer, let $X \subset \P^N$ be a smooth irreducible variety of dimension $n \ge 1$ and let $\F, \G$ be two vector bundles on $X$. We have:
\begin{itemize}
\item[(i)] If $H^0(\G(2))=H^1(\G(1))=0$, then $H^1(\G)=0$.
\item[(ii)] If $H^0(\F(-a))=H^1(\F(-2a))=0$, then $H^1(\F(-j))=0$ for all $j \ge 2a$.
\end{itemize} 
\end{lemma}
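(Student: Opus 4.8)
The plan is to prove both parts by restricting to a general hyperplane section and propagating the vanishing, relying repeatedly on the elementary remark that if $\mathcal E$ is a vector bundle on an integral projective variety $Y$ with $\O_Y(1)$ very ample, then multiplication by a general section $s\in H^0(\O_Y(1))$ identifies $H^0(\mathcal E(-1))$ with a subspace of $H^0(\mathcal E)$ --- indeed the kernel of $\cdot s\colon \mathcal E(-1)\to\mathcal E$ is a torsion subsheaf of the torsion-free sheaf $\mathcal E(-1)$, hence zero --- so that $h^0(\mathcal E(-j))$ is non-increasing in $j$ and, once it vanishes, stays zero. First I would dispose of the case $n=1$: here $H^0(\F(-a))=0$ already forces $H^0(\F(-2a))=0$ by this remark, and then Riemann--Roch on the curve $X$, using $H^1(\F(-2a))=0$ and $h^1(\F(-a))\ge 0$, yields $ar\deg\O_X(1)\le 0$, impossible for $a\ge 1$; an identical computation kills the hypotheses of (i). So for $n=1$ both statements hold vacuously and I may assume $n\ge 2$, in which case a general member $H\in|\O_X(1)|$ is smooth and irreducible by Bertini; I will use the restriction sequences $0\to\F(k-1)\to\F(k)\to(\F|_H)(k)\to 0$ (and similarly for $\G$) for various $k$.

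For (i) the chain of implications is short. From the restriction sequence for $\G$ with $k=2$ one gets an exact piece $H^0(\G(2))\to H^0((\G|_H)(2))\to H^1(\G(1))$; since the outer terms vanish by hypothesis, $H^0((\G|_H)(2))=0$, and the remark applied on $H$ then gives $H^0((\G|_H)(1))=0$. Feeding this into the restriction sequence with $k=1$ gives an exact piece $H^0((\G|_H)(1))\to H^1(\G)\to H^1(\G(1))$ with outer terms $0$, whence $H^1(\G)=0$.

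For (ii) I would first use $H^0(\F(-a))=0$ together with the remark on $X$ to get $H^0(\F(-j))=0$ for all $j\ge a$, so that the restriction sequence supplies injections $H^0((\F|_H)(-j))\hookrightarrow H^1(\F(-j-1))$ for $j\ge a$. The crucial move is to invoke this at $j=2a-1$ (legitimate since $2a-1\ge a$): the hypothesis $H^1(\F(-2a))=0$ forces $H^0((\F|_H)(-(2a-1)))=0$, and the remark on $H$ upgrades this to $H^0((\F|_H)(-j))=0$ for every $j\ge 2a-1$. Reinserting this into the restriction sequence, multiplication by $s$ injects $H^1(\F(-j-1))$ into $H^1(\F(-j))$ for every $j\ge 2a-1$; hence the sequence $h^1(\F(-2a))\ge h^1(\F(-2a-1))\ge\cdots$ is non-increasing, and since it starts at $0$ by hypothesis, $H^1(\F(-j))=0$ for all $j\ge 2a$.

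I do not anticipate a serious obstacle --- this is a routine diagram chase --- but two small points merit attention. First, in (ii) one must notice that $H^1(\F(-2a))=0$ has to be fed in at the twist $j=2a-1$ (producing a vanishing of $H^0$ on $H$) rather than directly at $j=2a$; after that the ``once zero, stays zero'' principle on $X$ and on $H$ does everything. Second, the hyperplane-section method degenerates when $n=1$, so one should confirm, as indicated above, that the hypotheses are then unsatisfiable and the statement is vacuous.
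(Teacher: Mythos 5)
Your proof is correct and follows essentially the same route as the paper: restrict to a hyperplane section, use that $h^0$ of negative twists is non-increasing, and chase the two restriction long exact sequences (the paper packages part (ii) as an induction whose step is exactly part (i), which is the same computation you perform). Your explicit Riemann--Roch check that the hypotheses are vacuous for $n=1$ is a reasonable precaution, though the paper gets this for free from the observation that $H^0$ of a nonzero sheaf on a nonempty zero-dimensional hyperplane section cannot vanish.
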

\begin{proof} 
Let $Y \in |\O_X(1)|$. To see (i), observe that the exact sequence
$$0 \to \G(1) \to \G(2) \to \G(2)_{|Y} \to 0$$
implies that $H^0(\G(2)_{|Y})=0$. In particular we have that $\dim Y \ge 1$ and, since $H^0(\G(1)_{|Y}) \subseteq H^0(\G(2)_{|Y})=0$, we deduce that $H^0(\G(1)_{|Y})=0$. Then, the exact sequence
$$0 \to \G \to \G(1) \to \G(1)_{|Y} \to 0$$
implies that $H^1(\G)=0$. This proves (i). We now show (ii) by induction on $j$. If $j=2a$, then $H^1(\F(-j))=0$ by hypothesis. If $j \ge 2a+1$, set $\G=\F(-j)$. Then $H^1(\G(1))=H^1(\F(-j+1))=0$ by induction. Also, since $-j+2 \le 1-2a \le -a$ we have that 
$$H^0(\G(2))=H^0(\F(-j+2)) \subseteq H^0(\F(-a))=0.$$ 
Therefore (i) implies that $H^1(\F(-j))=0$ and this proves (ii). 
\end{proof}
We will often use the following well-known properties of Ulrich bundles.

\begin{lemma} 
\label{ulr}
Let $X \subset \P^N, L=\O_X(1)$ and let $\E$ be a rank $r$ Ulrich bundle. We have:
\begin{itemize}
\item[(i)] $\E_{|Y}$ is Ulrich on a smooth hyperplane section $Y$ of $X$.
\item[(ii)] If $n \ge 2$, then $c_2(\E) L^{n-2}=\frac{1}{2}[c_1(\E)^2-c_1(\E) K_X] L^{n-2}+\frac{r}{12}[K_X^2+c_2(X)-\frac{3n^2+5n+2}{2}L^2] L^{n-2}$.
\end{itemize} 
\end{lemma}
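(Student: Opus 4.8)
The plan is to deduce (i) from a short cohomology chase with hyperplane sections, and (ii) from Hirzebruch--Riemann--Roch once one has pinned down the Hilbert polynomial of an Ulrich bundle. For (i): by definition, $\E$ being Ulrich on $X$ means $H^i(\E(-p))=0$ for all $i\ge 0$ and all $1\le p\le n$. Given a smooth hyperplane section $Y\subset X$ (so $\dim Y=n-1$), I would twist the structure sequence $0\to\O_X(-1)\to\O_X\to\O_Y\to 0$ by the locally free sheaf $\E(-p)$ to obtain
$$0\to\E(-p-1)\to\E(-p)\to\E_{|Y}(-p)\to 0.$$
For each $p$ with $1\le p\le n-1$ one has $1\le p\le n$ and $2\le p+1\le n$, so the two outer terms have vanishing cohomology in every degree, and the long exact sequence forces $H^i(\E_{|Y}(-p))=0$ for all $i$; this is exactly the assertion that $\E_{|Y}$ is Ulrich on $Y$.

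For (ii), the first step is to observe that the Hilbert polynomial $P(\ell):=\chi(\E(\ell))$ has degree $n$ with leading term $\tfrac{r}{n!}(L^n)\,\ell^n$, while the Ulrich vanishing gives $P(-1)=\cdots=P(-n)=0$; hence $P(\ell)=\tfrac{r\,d}{n!}(\ell+1)(\ell+2)\cdots(\ell+n)$, where $d=L^n$. On the other hand, by Hirzebruch--Riemann--Roch, $\chi(\E(\ell))=\int_X\ch(\E)\,e^{\ell L}\,\td(X)=\sum_{k\ge 0}\tfrac{\ell^k}{k!}\int_X L^k\,[\ch(\E)\td(X)]_{n-k}$, where $[\,\cdot\,]_j$ denotes the component of degree $j$. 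I would then equate the coefficients of $\ell^{n-2}$ on the two sides. On the HRR side that coefficient is $\tfrac{1}{(n-2)!}\int_X L^{n-2}\,[\ch(\E)\td(X)]_2$, and, using $\td_1(X)=-\tfrac12 K_X$ and $\td_2(X)=\tfrac{1}{12}(K_X^2+c_2(X))$,
$$[\ch(\E)\td(X)]_2=\tfrac12\big(c_1(\E)^2-2c_2(\E)\big)-\tfrac12\,c_1(\E)K_X+\tfrac{r}{12}\big(K_X^2+c_2(X)\big);$$
on the Hilbert-polynomial side it is $\tfrac{r\,d}{n!}\,e_2(1,2,\dots,n)$ with $e_2(1,\dots,n)=\sum_{1\le i<j\le n}ij=\tfrac{(n-1)n(n+1)(3n+2)}{24}$. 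Clearing $(n-2)!$ and using $(n+1)(3n+2)=3n^2+5n+2$, the resulting equality rearranges into
$$c_2(\E)L^{n-2}=\tfrac12\big(c_1(\E)^2-c_1(\E)K_X\big)L^{n-2}+\tfrac{r}{12}\Big(K_X^2+c_2(X)-\tfrac{3n^2+5n+2}{2}L^2\Big)L^{n-2},$$
which is the claim.

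Both arguments are standard and I do not foresee any real obstacle; the only point demanding care is the bookkeeping of the degree-$2$ component of $\ch(\E)\td(X)$ and the evaluation of the elementary symmetric function $e_2(1,\dots,n)$, so that the constant $\tfrac{3n^2+5n+2}{2}$ comes out exactly. (One could alternatively iterate (i) down to a surface and invoke the classical surface formula, but the direct Riemann--Roch computation above is cleaner.)
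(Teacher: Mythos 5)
Your proof is correct: part (i) is the standard restriction-sequence argument, and part (ii) correctly extracts the $\ell^{n-2}$ coefficient from the factored Hilbert polynomial $\frac{rd}{n!}(\ell+1)\cdots(\ell+n)$ versus Hirzebruch--Riemann--Roch, with $e_2(1,\dots,n)=\frac{(n-1)n(n+1)(3n+2)}{24}$ and the Todd class contributions all accounted for accurately. The paper itself gives no argument, deferring to \cite[Lemma 3.2]{lr1}; your computation is precisely the standard derivation underlying that reference, so there is nothing to add.
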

\begin{proof} 
See for example \cite[Lemma 3.2]{lr1}.
\end{proof}

\section{Ulrich subvarieties}
\label{due}

Ulrich subvarieties were defined in \cite{lr2}. We now recall some of the properties that they enjoy. 

First, we give a simplified version of \cite[Lemma 3.2]{lr2}, adapted to our purposes.

\begin{lemma} 
\label{zeta}
Let $n \ge 2$, let $X \subset \P^N$ be a smooth irreducible $n$-dimensional variety and let $\E$ be a rank $r \ge 2$ Ulrich bundle with $\det \E = \O_X(D)$. Then there is a subvariety $Z \subset X$ such that, if $Z \ne \emptyset$, we have:
\begin{itemize}
\item[(i)] $[Z]=c_2(\E)$.
\item[(ii)] If $r=2$, then $\omega_Z \cong \O_Z(K_X+D)$ and $c_2(Z)=c_2(X)_{|Z}-c_2(\E)_{|Z}+K_Z^2-K_Z {K_X}_{|Z}$.
\item[(iii)] If $r=3$, then 

$c_2(Z)=c_2(X)_{|Z}-c_2(\E)_{|Z}-c_1(\E)^2_{|Z}+K_Z {K_X}_{|Z}-{K_X^2}_{|Z}+2K_Z c_1(\E)_{|Z}-2{K_X}_{|Z} c_1(\E)_{|Z}$.
\end{itemize}
\end{lemma}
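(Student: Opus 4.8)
The plan is to realize $Z$ as the degeneracy locus of $r-1$ general sections of $\E$, as in the definition of Ulrich subvarieties in \cite{lr2}, and then to extract (i)--(iii) from the structure of that locus together with the normal bundle sequence. Recall that an Ulrich bundle is globally generated (see e.g.\ \cite{es}); since $r \ge 2$, a general choice of sections $s_1, \dots, s_{r-1} \in H^0(\E)$ gives a morphism $\phi \colon \O_X^{\oplus(r-1)} \to \E$, and we let $Z$ be the locus where $s_1, \dots, s_{r-1}$ become linearly dependent, that is, where $\phi$ drops rank. Applying Kleiman--Bertini to the globally generated sheaf $\SHom(\O_X^{\oplus(r-1)},\E) \cong \E^{\oplus(r-1)}$ one obtains, as in \cite[Lemma 3.2]{lr2}, that if $Z \neq \emptyset$ then $Z$ has pure codimension $2$, is smooth away from the deeper degeneracy locus $D_0(\phi)$ (of expected codimension $\ge 2(r-1)$), $\phi$ is a subbundle inclusion on $X \setminus Z$, and along the smooth part of $Z$ the kernel $\ker(\phi_{|Z})$ is a line bundle, $\operatorname{coker}(\phi_{|Z})$ has rank $2$, and $N_{Z/X} \cong \SHom(\ker(\phi_{|Z}), \operatorname{coker}(\phi_{|Z}))$. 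Part (i) is then the Thom--Porteous formula, or concretely a Chern class computation from $0 \to \O_X^{\oplus(r-1)} \xrightarrow{\phi} \E \to \cQ \to 0$ with $\cQ$ of rank $1$, which gives $c_1(\E)=D$ and $c_2(\E)=[Z]$.

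For (ii), when $r=2$ the subvariety $Z$ is the zero scheme of the single general section $s_1$ of the rank $2$ bundle $\E$, so $\phi_{|Z}=0$ and $N_{Z/X}\cong \E_{|Z}$. Adjunction then gives $\omega_Z \cong (\omega_X)_{|Z}\otimes\det\E_{|Z}\cong\O_Z(K_X+D)$, which is the first assertion. For the second, substitute $N_{Z/X}\cong\E_{|Z}$ into the normal bundle sequence $0 \to T_Z \to (T_X)_{|Z} \to \E_{|Z} \to 0$, so that $c(T_Z)=c((T_X)_{|Z})\,c(\E_{|Z})^{-1}$; its degree $1$ part is the adjunction relation $c_1(\E)_{|Z}=(K_Z-K_X)_{|Z}$, and feeding this into the degree $2$ part rearranges it into the stated formula for $c_2(Z)$.

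For (iii), when $r=3$, the extra ingredient is a tautological pencil on $Z$. Along the smooth part of $Z=D_1(\phi)$ the kernel $\SS:=\ker(\phi_{|Z})$ is a line subbundle of $\O_Z^{\oplus 2}$, so dualizing the inclusion gives a surjection $\O_Z^{\oplus 2}\twoheadrightarrow\SS^{\vee}$; hence $\SS^{\vee}$ is globally generated by two sections, defines a base point free pencil, and its class $\ell:=c_1(\SS^{\vee})$ satisfies $\ell^2=0$. Using the exact sequences $0\to\SS\to\O_Z^{\oplus 2}\to\operatorname{im}(\phi_{|Z})\to 0$ and $0\to\operatorname{im}(\phi_{|Z})\to\E_{|Z}\to\operatorname{coker}(\phi_{|Z})\to 0$ together with $N_{Z/X}\cong\SS^{\vee}\otimes\operatorname{coker}(\phi_{|Z})$, one computes $c_1(N_{Z/X})=c_1(\E)_{|Z}+\ell$ and $c_2(N_{Z/X})=c_2(\E)_{|Z}$ (the latter is also the self-intersection $[Z]_{|Z}=c_2(\E)_{|Z}$). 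Plugging this into $c(T_Z)=c((T_X)_{|Z})\,c(N_{Z/X})^{-1}$, the degree $2$ part is $c_2(Z)=c_2(X)_{|Z}+{K_X}_{|Z}\,c_1(N_{Z/X})+c_1(N_{Z/X})^2-c_2(\E)_{|Z}$; one then uses $\ell^2=0$, i.e.\ $c_1(N_{Z/X})^2 = 2\,c_1(N_{Z/X})\,c_1(\E)_{|Z}-c_1(\E)^2_{|Z}$, and replaces $c_1(N_{Z/X})$ by $(K_Z-K_X)_{|Z}$ (adjunction) throughout, landing on the claimed identity. This is also why the $r=3$ formula genuinely differs from the $r=2$ one: here $c_1(N_{Z/X})$ equals $c_1(\E)_{|Z}$ plus the pencil class $\ell$, not $c_1(\E)_{|Z}$ itself.

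The step I expect to be the main obstacle is not these computations, each of which is routine once the set-up is fixed, but the geometric input behind the construction: that for a general choice of sections the degeneracy locus $Z$ is reduced, of the expected codimension $2$, smooth enough for the degree $\le 2$ Chern class identities to make sense on it, and that the kernel and cokernel sheaves of $\phi_{|Z}$ are locally free along the smooth locus. All of this is provided by \cite[Lemma 3.2]{lr2}; a self-contained argument would need to re-establish these transversality statements, in particular to control the locus $D_0(\phi)$.
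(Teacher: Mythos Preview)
Your proposal is correct and follows exactly the construction the paper defers to: the paper's own proof is simply ``See \cite[Lemma 3.2]{lr2}'', and what you have written is precisely a fleshed-out version of that argument (degeneracy locus of $r-1$ general sections, Thom--Porteous for (i), normal bundle sequence with $N_{Z/X}\cong\E_{|Z}$ for (ii), and the kernel/cokernel analysis with the pencil class $\ell$ satisfying $\ell^2=0$ for (iii)). The only minor slip is the expected codimension of $D_0(\phi)$, which is $r(r-1)$ rather than $2(r-1)$, but this is inconsequential for the argument.
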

\begin{proof} 
See \cite[Lemma 3.2]{lr2}.
\end{proof}
Next, we recall the statement of \cite[Thm.~1]{lr2}. Let $Z \subset X$ be a Cohen-Macaulay, pure codimension $2$ subvariety and let $D$ be a divisor on $X$. The short exact sequence
$$0 \to  \I_{Z/X}(K_X+D) \to \O_X(K_X+D) \to \O_Z(K_X+D) \to 0$$
determines a coboundary map
$$\gamma_{Z,D} : H^{n-2}(\O_Z(K_X+D)) \to H^{n-1}(\I_{Z/X}(K_X+D))$$
whose dual, by Serre duality, is
$$\gamma_{Z,D}^*: \Ext^1_{\O_X}(\I_{Z/X}(D), \O_X) \to H^0(\omega_Z(-K_X-D)).$$
Then we have

\begin{thm} 
\label{1-1}
Let $X \subset \P^N$ be a smooth irreducible variety of dimension $n \ge 2$, degree $d \ge 2$ and let $D$ be a divisor on $X$. Then $(X,\O_X(1))$ carries a rank $r \ge 2$ Ulrich vector bundle $\E$ with $\det \E = \O_X(D)$ if and only if there is a subvariety $Z \subset X$ such that:
\begin{itemize}
\item[(a)] $Z$ is either empty or of pure codimension $2$,
\item[(b)] if $Z \ne \emptyset$ and either $r=2$ or $n \le 5$, then $Z$ is smooth (possibly disconnected),
\item[(c)] if $Z \ne \emptyset$ and $n \ge 6$, then $Z$ is either smooth or is normal, Cohen-Macaulay, reduced and with $\dim \Sing(Z) = n-6$,
\end{itemize}
and there is a $(r-1)$-dimensional subspace $W \subseteq \Ext^1_{\O_X}(\I_{Z/X}(D), \O_X)$ such that the following hold:
\begin{itemize}
\item[(i)] If $Z \ne \emptyset$, then $\gamma_{Z,D}^*(W)$ generates $\omega_Z(-K_X-D)$ (that is the multiplication map 

\noindent $\gamma_{Z,D}^*(W) \otimes \O_Z \to \omega_Z(-K_X-D)$ is surjective).
\item[(ii)] $H^0(K_X+nH-D)=0$.
\item[(iii)] $H^0(\I_{Z/X}(D-H))=0$.
\item[(iv)] If $n \ge 3$, then $H^i(\I_{Z/X}(D-pH))=0$ for $1 \le i \le n-2, 1 \le p \le n$.
\item[(v)] $(-1)^{n-1}\chi(\I_{Z/X}(D-pH))=(r-1) \chi(K_X+pH)$, for $1 \le p \le n$.
\item[(vi)] $\delta_{Z, W, -nH} : H^{n-1}(\I_{Z/X}(D-nH)) \to W^* \otimes H^n(-nH)$ is either injective or surjective.
\end{itemize}
Moreover the following exact sequences hold
\begin{equation}
\label{est}
0 \to W^* \otimes \O_X \to \E \to \I_{Z/X}(D) \to 0
\end{equation}
and, if $Z \ne \emptyset$,
$$0 \to \O_X(-D) \to \E^* \to  W \otimes \O_X \to \omega_Z(-K_X-D) \to 0.$$
\end{thm}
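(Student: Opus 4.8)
The plan is to prove the two implications separately, realising the bundle $\E$ and the pair $(Z,W)$ as the two sides of a codimension-two Serre correspondence, so that each of conditions (a)--(c) and (i)--(vi) is read off from the long exact cohomology sequences of \eqref{est} and of its dual, twisted by $-pH$. For the direction ``$\E \Rightarrow (Z,W)$'': recall that for an Ulrich bundle $\E$ a finite linear projection $\pi\colon X\to\P^n$ gives $\pi_*\E\cong\O_{\P^n}^{\oplus rd}$, so $\E$ is globally generated with $h^0(\E)=rd$ and $H^i(\E)=0$ for $i>0$. I would choose a general $(r-1)$-dimensional subspace $W^*\subseteq H^0(\E)$; global generation makes the evaluation $\Hom(W^*\otimes\O_X,\E)\to\Hom(W^*,\E_x)$ surjective for every $x\in X$, so a Kleiman--Bertini dimension count applied to the universal determinantal stratification shows that the resulting map $W^*\otimes\O_X\to\E$ is generically injective with torsion-free cokernel, and that its degeneracy locus $Z$ is either empty or Cohen--Macaulay of pure codimension $2$, with the further rank-drop locus of codimension $(r-1-(r-3))(r-(r-3))=6$; this yields (a), the smoothness of $Z$ when $r=2$ or $n\le 5$, and the bound $\dim\Sing Z=n-6$ when $n\ge 6$, i.e.\ (b) and (c). The cokernel has determinant $\O_X(D)$, hence equals $\I_{Z/X}(D)$, which gives \eqref{est}; dualising \eqref{est} and using, for the codimension-two Cohen--Macaulay $Z$, that $\SExt^0_{\O_X}(\I_{Z/X}(D),\O_X)=\O_X(-D)$ and $\SExt^1_{\O_X}(\I_{Z/X}(D),\O_X)\cong\omega_Z(-K_X-D)$ (local duality) produces the second displayed exact sequence, whose right-exactness is condition (i) once the connecting map is identified with $\gamma_{Z,D}^*$ via Serre duality. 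The remaining conditions are then formal consequences of the cohomology sequences: (iii) and (iv) come from $H^i(\E(-pH))=0$ together with Kodaira--Akizuki--Nakano vanishing $H^j(\O_X(-pH))=0$ for $j<n$ (available since $n\ge 2$); (v) is the identity $\chi(\E(-pH))=0$ rewritten through $\chi(\O_X(-pH))=(-1)^n\chi(K_X+pH)$; (ii) follows from $\O_X(-D)\hookrightarrow\E^\vee$ twisted by $K_X+nH$ together with $H^0(\E^\vee(K_X+nH))=H^n(\E(-nH))^\vee=0$; and (vi) holds because $\delta_{Z,W,-nH}$ sits between $H^{n-1}(\E(-nH))=0$ and $H^n(\E(-nH))=0$, so it is an isomorphism.

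For the converse I would run this backwards. Given $(Z,W)$, \emph{define} $\E$ by the extension in $\Ext^1_{\O_X}(\I_{Z/X}(D),W^*\otimes\O_X)=W^*\otimes\Ext^1_{\O_X}(\I_{Z/X}(D),\O_X)$ attached to the tautological inclusion $W\hookrightarrow\Ext^1_{\O_X}(\I_{Z/X}(D),\O_X)$, so that \eqref{est} holds by construction. The point on which everything hinges is that this $\E$ is \emph{locally free}; by the local-to-global analysis of the Serre construction in codimension two this is governed precisely by the surjectivity in (i) of $\gamma_{Z,D}^*(W)\otimes\O_Z\to\omega_Z(-K_X-D)$, with the hypotheses (b) and (c) on $\Sing Z$ being exactly what is needed to propagate local freeness from the locus where $Z$ is a local complete intersection to all of $X$ (when $Z=\emptyset$ the extension is automatically a bundle, and for $r=2$ this is the classical Serre construction). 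Granting local freeness, $\det\E=\O_X(D)$ is immediate from \eqref{est}, and one checks that $\E$ is Ulrich by the same sequences: $H^0(\E(-pH))=0$ because $H^0(\O_X(-pH))=0$ and $H^0(\I_{Z/X}(D-pH))\hookrightarrow H^0(\I_{Z/X}(D-H))=0$ by (iii); $H^i(\E(-pH))=0$ for $1\le i\le n-2$ by (iv) and Kodaira vanishing; and for the two top degrees, (v) forces $h^{n-1}(\E(-pH))=h^n(\E(-pH))$, then (ii) gives $H^n(\I_{Z/X}(D-nH))=0$ and (vi) makes $\delta_{Z,W,-nH}$ injective or surjective, which together kill both top-degree groups for $p=n$; the remaining twists $1\le p\le n-1$ then follow from the injections $H^0(\E^\vee(K_X+pH))\hookrightarrow H^0(\E^\vee(K_X+nH))=H^n(\E(-nH))^\vee=0$ together with the equality $h^{n-1}=h^n$.

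The main obstacle is the converse, and specifically the local freeness of the Serre extension when $Z$ is allowed to be singular: outside the locally complete intersection case one must show that the generation condition (i), under the codimension bounds (b)/(c), is exactly sufficient for $\E$ to be a vector bundle rather than merely a reflexive sheaf, and this is the technical heart that makes the language of Ulrich subvarieties of \cite{lr2} necessary and that forces the constraint $\dim\Sing Z=n-6$. Everything else — the production of $Z$ by genericity in the forward direction, and the verification of (ii)--(vi) in both directions — is bookkeeping with Ulrich vanishing, Kodaira--Akizuki--Nakano vanishing, Serre duality and the Euler characteristic identity (v).
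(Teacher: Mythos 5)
First, a point of order: this paper does not prove Theorem \ref{1-1} at all. It is explicitly \emph{recalled} from \cite[Thm.~1]{lr2} (``Next, we recall the statement of [LR2, Thm.~1]''), and the only ``proof'' in the source is the citation. So there is no internal argument to compare yours against; what can be assessed is whether your sketch would stand on its own as a proof of the quoted statement.

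As a reconstruction of the expected Hartshorne--Serre-type argument, your outline is largely sound. The forward direction (global generation of $\E$ via $\pi_*\E\cong\O_{\P^n}^{\oplus rd}$, the dependency locus of $r-1$ general sections, identification of the cokernel with $\I_{Z/X}(D)$, and the derivation of (ii)--(vi) from the two long exact sequences together with Kodaira--Akizuki--Nakano and Serre duality) is correct in all the steps you spell out; in particular your observation that (v) forces the source and target of $\delta_{Z,W,-pH}$ to have equal dimension, so that (vi) upgrades to an isomorphism and kills both top cohomology groups, is exactly the right mechanism. The genuine gap is the one you yourself flag, and it is not a small one: in the converse direction you \emph{define} $\E$ as an extension and must prove it is locally free. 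For $r=2$ with $Z$ smooth this is the classical Serre criterion via (i), but for $r=3$, and especially when $n\ge 6$ and $Z$ is merely normal, Cohen--Macaulay and reduced with $\dim\Sing(Z)=n-6$ (so not locally a complete intersection along $\Sing(Z)$), the assertion that ``the generation condition (i) under the codimension bounds (b)/(c) is exactly sufficient'' is precisely the content of the theorem, not a consequence of standard statements you can cite. One must show that the extension sheaf is reflexive, that local freeness holds off $\Sing(Z)$ by the generation hypothesis, and that it extends across $\Sing(Z)$ using the stated bound on its dimension; none of this is supplied. A secondary, smaller gap is in the forward direction: conditions (b) and (c) require the \emph{existence} of a choice of $W$ for which the dependency locus is smooth (resp.\ singular exactly in dimension $n-6$), and your appeal to ``a Kleiman--Bertini dimension count applied to the universal determinantal stratification'' is the right idea but needs to be carried out (global generation gives surjectivity of evaluation, hence transversality of a general $W^*$ to the determinantal strata); as written it is an assertion. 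In short: the bookkeeping for (ii)--(vi) is complete and correct, but the structural core of the equivalence --- local freeness of the extension and the genericity of $Z$ --- is exactly what is missing, which is why the paper outsources the whole statement to \cite{lr2}.
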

Then we have

\begin{defi} 
\label{ulrsub}
Let $n \ge 2, d \ge 2, r \ge 2$ and let $D$ be a divisor on $X$. An {\it Ulrich subvariety of $X$} is a subvariety $Z \subset X$ carrying a $(r-1)$-dimensional subspace $W \subseteq \Ext^1_{\O_X}(\I_{Z/X}(D), \O_X)$ such that properties (a)-(c) and (i)-(vi) of Theorem \ref{1-1} hold.
\end{defi}
Hence, if $n \ge 2, d \ge 2, r \ge 2$, to each rank $r$ Ulrich vector bundle $\E$ with $\det \E = \O_X(D)$ one can associate as in Theorem \ref{1-1} an Ulrich subvariety $Z$. In particular, by its construction, $Z$ satisfies the properties of Lemma \ref{zeta}.
 
\section{Veronese embeddings of complete intersections}

In this section we study Ulrich bundles on Veronese embeddings of complete intersections.

Given integers $d_i \ge 1, 1 \le i \le s$, we set 
$$d = \prod\limits_{i=1}^s d_i, S = \sum\limits_{i=1}^s d_i \ \hbox{and} \ S' =  \begin{cases} 0 & \hbox{if } s=1 \\ \sum\limits_{1 \le i < j \le s}d_id_j & \hbox{if } s \ge 2 \end{cases}.$$
Then we have
\begin{lemma} 
\label{genci}
Let $s \ge 1, r \ge 2, a \ge 2, m \ge 3$ and let $X \subset \P^{m+s}$ be a smooth $m$-dimensional complete intersection of hypersurfaces of degrees $(d_1, \ldots, d_s)$ with $d_i \ge 1, 1 \le i \le s$ and degree $d$. Let $H \in |\O_X(1)|$. Let $\E$ be a rank $r$ Ulrich vector bundle for $(X,\O_X(a))$ and let $Z \subset X$ be the associated Ulrich subvariety, as in Theorem \ref{1-1} applied to the Veronese embedding $v_a(X) \subset \P^N$. Then $Z$ is irreducible, of dimension $m-2$, smooth when $r=2$ or when $m \le 5$ and:
\begin{itemize}
\item[(i)] $K_X=(S-s-m-1)H$.
\item[(ii)] $c_2(X)=\left[\binom{m+s+1}{2}+S(S-s-m-1)-S'\right]H^2$.
\item[(iii)] $c_1(\E)=uH$ where $u=\frac{r}{2}[(m+1)(a-1)+S-s]$.
\item[(iv)] $\begin{aligned}[t]
\deg_H(Z)=\frac{rd}{24}[&-4+6a-2a^2-7m+12am-5a^2m-3m^2+6am^2-3a^2m^2+3r-6ar+3a^2r+\\
& +6mr-12amr+6a^2mr+3m^2r-6am^2r+3a^2m^2r-7s+6as-6ms+6ams+\\
& +6rs-6ars+6mrs-6amrs-3s^2+3rs^2+6S-6aS+6mS-6amS-6rS+\\
& +6arS-6mrS+6amrS+6sS-6rsS-2S^2+3rS^2-2S')].
\end{aligned}$
\item[(v)] $\begin{aligned}[t]
\chi(\O_Z(\ell))=& \binom{\ell+m+s}{m+s}+(-1)^{m+1}\frac{rd}{m!}(u-\ell-a) \cdots (u-\ell-ma)+(-1)^{m+s}(r-1)\binom{u-\ell-1}{m+s}+ \\ & \hskip -2cm + \sum_{k=1}^s(-1)^{k+m+s}\sum_{1\le i_1<\ldots<i_k\le s}\left[\binom{d_{i_1}+\ldots+d_{i_k}-\ell-1}{m+s}+(r-1)\binom{d_{i_1}+\ldots+d_{i_k}+u-\ell-1}{m+s}\right].
\end{aligned}$
\end{itemize}
Moreover suppose that one of the following holds:
\begin{itemize}
\item[(1)] $m \ge 5$, or 
\item[(2)] $m=4, d=1$, or
\item[(3)] $m=4$, $X=X' \cap F$, where $X' \subset \P^{5+s}$ is a smooth complete intersection, $F \subset \P^{5+s}$ is a hypersurface of degree $a$ and $\E = \E'_{|X}$, where $\E'$ is a vector bundle on $X'$, or
\item[(4)] $m=4, d \ge 2$, $X$ is very general and $(d_1, \ldots, d_s) \not\in \{(2, \underbrace{1, \ldots, 1}_\text{s-1}), s \ge 1; (2, 2, \underbrace{1, \ldots, 1}_\text{s-2}), s \ge 2\}$  

\vskip -.4cm \noindent (up to permutation). 
\end{itemize}
Then
\begin{itemize}
\item[(vi)] $c_2(\E)=eH^2$ with 

$\begin{aligned}[t] 
e=\frac{r}{24}[&-4+6a-2a^2-7m+12am-5a^2m-3m^2+6am^2-3a^2m^2+3r-6ar+3a^2r+\\
& +6mr-12amr+6a^2mr+3m^2r-6am^2r+3a^2m^2r-7s+6as-6ms+6ams+\\
& +6rs-6ars+6mrs-6amrs-3s^2+3rs^2+6S-6aS+6mS-6amS-6rS+\\
& +6arS-6mrS+6amrS+6sS-6rsS-2S^2+3rS^2-2S')].
\end{aligned}$
\end{itemize}
\end{lemma}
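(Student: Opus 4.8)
The plan is to reduce assertion (vi) to the single statement that $c_2(\E)$ is a rational multiple of $H^2$ inside $H^4(X,\Q)$, and then to establish this in each of the four cases; the value of the constant is then forced. For the reduction, recall that $[Z]=c_2(\E)$ by Lemma~\ref{zeta}(i), and that $Z$ is irreducible of dimension $m-2$ by the first part of the present lemma, so that
$$\deg_H(Z)=[Z]\cdot H^{m-2}=c_2(\E)\cdot H^{m-2}.$$
If we know $c_2(\E)=e'H^2$ in $H^4(X,\Q)$ for some $e'\in\Q$, then $\deg_H(Z)=e'\,H^m=e'\,d$, since $H^m=\deg X=d$; comparing with the expression for $\deg_H(Z)$ in (iv) forces $e'=\frac{r}{24}\left[\,\cdots\,\right]$, the same bracket as in (vi), i.e.\ $e'=e$. (It does not matter here whether $r=2$ or $r=3$: Lemma~\ref{zeta}(i) applies to both.) So the whole problem is to show that the algebraic --- hence Hodge --- class $c_2(\E)$ lies in $\Q H^2$.

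In cases (1) and (2) there is nothing to do: if $m\ge5$, then by the Lefschetz hyperplane theorem $H^4(X,\Q)\cong H^4(\P^{m+s},\Q)=\Q H^2$, while if $d=1$, then $X\cong\P^4$ and again $H^4(X,\Q)=\Q H^2$. In case (3) write $X=X'\cap F$ with $X'\subset\P^{5+s}$ a smooth complete intersection of dimension $5$, $F$ a hypersurface of degree $a$, and $\E=\E'_{|X}$ for a bundle $\E'$ on $X'$; then $c_2(\E)=c_2(\E')_{|X}$, and since $\dim X'=5$ the Lefschetz hyperplane theorem gives $c_2(\E')\in H^4(X',\Q)=\Q(H')^2$, which restricts on $X$ to a rational multiple of $H^2$.

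Case (4) --- $m=4$, $d\ge2$, $X$ very general of multidegree $(d_1,\dots,d_s)$ not equivalent to $(2)$ or $(2,2)$ --- is the substantial one. Here $H^4(X,\Q)=\Q H^2\oplus H^4_{\mathrm{prim}}(X,\Q)$, and it is enough to show that for $X$ very general $H^4_{\mathrm{prim}}(X,\Q)$ carries no nonzero Hodge class, for then $c_2(\E)\in\Q H^2$. Realize $X=X_{b_0}$ inside the parameter family $B$ of smooth complete intersections of the given multidegree in $\P^{4+s}$, with monodromy representation $\rho\colon\pi_1(B,b_0)\to\GL\!\big(H^4(X_{b_0},\Q)\big)$, which preserves the orthogonal decomposition $\Q H^2\oplus H^4_{\mathrm{prim}}$. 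By Beauville's theorem on the monodromy of universal families of complete intersections, and because $(d_1,\dots,d_s)$ is not equivalent to $(2)$ or $(2,2)$, the Zariski closure of $\rho(\pi_1(B))$ acts on $H^4_{\mathrm{prim}}(X_{b_0},\Q)$ as the full orthogonal group of the intersection form; in particular $H^4_{\mathrm{prim}}$ is an irreducible $\pi_1(B)$-module with no nonzero invariant vector. On the other hand, since again $(d_1,\dots,d_s)$ is not equivalent to $(2)$ or $(2,2)$, one has $h^{3,1}_{\mathrm{prim}}(X)>0$, so $H^4_{\mathrm{prim}}$ is not a variation of Hodge structure of pure type $(2,2)$, and by irreducibility it has no nonzero sub-variation of pure type $(2,2)$ either. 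Combining these (and using that Hodge loci are proper closed subvarieties, by Cattani--Deligne--Kaplan), a very general $X_{b_0}$ has no nonzero Hodge class in $H^4_{\mathrm{prim}}$: any such class would have Hodge locus all of $B$ and would therefore span a nonzero sub-variation of Hodge type $(2,2)$, a contradiction. Hence $c_2(\E)\in\Q H^2$, and by the reduction above $c_2(\E)=eH^2$ with $e$ as in (vi).

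The main obstacle is entirely case (4): one must know that $(2)$ and $(2,2)$ are precisely the $4$-dimensional complete intersections whose primitive $H^4$ is of pure Hodge type $(2,2)$ --- so that the Noether--Lefschetz argument, and in fact conclusion (vi) itself, fails there, as one sees on a smooth quadric fourfold --- and one must quote the big-monodromy and Hodge-locus inputs in exactly the form needed. Cases (1)--(3) are immediate from the Lefschetz hyperplane theorem, and the identification of the constant $e$ is the elementary comparison with (iv) carried out in the first paragraph.
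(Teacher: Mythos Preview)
Your proof is correct and, for cases (1)--(3) and for the reduction step, essentially identical to the paper's: both use Lefschetz for $H^4$ when $m\ge 5$ or $X\cong\P^4$, restrict $c_2(\E')$ from the $5$-fold $X'$ in case (3), and recover $e$ from $\deg_H(Z)=c_2(\E)\cdot H^{m-2}$ via (iv) and Lemma~\ref{zeta}(i).

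The only genuine difference is case (4). The paper simply invokes a Noether--Lefschetz theorem for codimension-$2$ algebraic classes on very general complete intersections (Spandaw \cite[Thm.~1.1]{sp}), which directly gives $c_2(\E)\in\Z H^2$. You instead \emph{reprove} this statement: big monodromy on $H^4_{\mathrm{prim}}$ (Beauville), the fact that $h^{3,1}_{\mathrm{prim}}>0$ away from the multidegrees $(2)$ and $(2,2)$, Deligne semisimplicity/theorem of the fixed part to conclude that a Hodge locus covering all of $B$ would force a sub-VHS of pure type $(2,2)$, and Cattani--Deligne--Kaplan for algebraicity of Hodge loci. This is heavier but more self-contained, and it makes transparent \emph{why} $(2)$ and $(2,2)$ are excluded (they are exactly the $4$-dimensional complete intersections with $H^4_{\mathrm{prim}}$ of pure type $(2,2)$). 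One minor caution: Beauville's list of monodromy exceptions is not literally ``$(2)$ and $(2,2)$''; what you actually need, and what your argument uses, is irreducibility of the monodromy action on $H^4_{\mathrm{prim}}$ together with $h^{3,1}_{\mathrm{prim}}>0$, and both hold outside those two multidegrees. Also, you work in $\Q H^2$ whereas the paper works in $\Z H^2$; this is harmless, since once $c_2(\E)=e'H^2$ with $e'\in\Q$, the comparison with (iv) pins down $e'=e$ regardless.
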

\begin{proof}
(i) and (ii) follow from the tangent and Euler sequence of $X \subset \P^{m+s}$. By Lefschetz's theorem (see for example \cite[Thm.~2.1]{h}), we have that $\Pic(X) \cong \Z H$. Then (iii) follows by \cite[Lemma 3.2]{lo}. Note that $\deg v_a(X)=(aH)^m=a^md \ge 2$, thus \cite[Rmk.~4.3]{lr2} applies. Since $H^1(\O_X(-u))=0$ we have that $Z \ne \emptyset$ by \cite[Rmk.~4.3(i)]{lr2}, hence $Z$ is of dimension $m-2$, smooth when $r=2$ or when $m \le 5$. Also, $u \ge 2a$ and $H^i(\E(-pa))=0$ for $i \ge 0, 1 \le p \le m$, hence $H^1(\E(-u))=0$ by Lemma \ref{van}(ii). On the other hand, $H^2(\O_X(-u))=0$, hence $Z$ is irreducible by \cite[Rmk.~4.3(vi)]{lr2}. Next, Lemma \ref{zeta}(i) gives that $[Z]=c_2(\E)$, hence (iv) follows by (i)-(iii) and Lemma \ref{ulr}(ii) with $L=aH$. As for (v), observe first that $\chi(\E(\ell))$ is a polynomial in $\ell$ of degree $m$ with leading coefficient $\frac{rd}{m!}$. On the other hand, as $\E$ is Ulrich for $(X,\O_X(a))$, we have that $\chi(\E(-pa))=0$ for $1 \le p \le m$. Therefore
$$\chi(\E(\ell))=\frac{rd}{m!}(\ell+a) \cdots (\ell+ma).$$
Now, the exact sequence \eqref{est} twisted by $\O_X(\ell-u)$ gives
$$\chi(\O_Z(\ell))=\chi(\O_X(\ell))-\chi(\I_{Z/X}(\ell)=\chi(\O_X(\ell))-\chi(\E(\ell-u))+(r-1)\chi(\O_X(\ell-u))$$
and computing $\chi(\O_X(\ell))$ and $\chi(\O_X(\ell-u))$ with the Koszul resolution of $\I_{X/\P^{m+s}}$, we get (v).  
Finally, to see (vi), we claim that under any of the hypotheses (1), (2), (3) or (4), the following holds:
\begin{equation}
\label{meglio}
\exists e \in \Z \ \hbox{such that} \ c_2(\E) = eH^2. 
\end{equation}
In fact, if $m \ge 5$, we have by Lefschetz's theorem (see for example \cite[Thm.~2.1]{h}) that $H^4(X,\Z) \cong \Z H^2$. If $m=4$ and $d=1$, we have that $X \cong \P^4$. Hence \eqref{meglio} holds under either one of the hypotheses (1) or (2). Under hypothesis (3), we have as above that $H^4(X',\Z) \cong \Z (H')^2, H' \in |\O_{X'}(1)|$, hence $c_2(\E') = e(H')^2$ on $X'$, for some $e \in \Z$. Therefore $c_2(\E) = c_2(\E'_{|X})=eH^2$, so that \eqref{meglio} holds under hypothesis (3). Also, under hypothesis (4), we know again by Noether-Lefschetz's theorem (see for example \cite[Thm.~1.1]{sp}) that every algebraic cohomology class of codimension $2$ in $X$ is in $\Z H^2$. Since $[Z]=c_2(\E)$ by Lemma \ref{zeta}(i), we have that \eqref{meglio} holds under hypothesis (4).
Now, since $[Z]=c_2(\E)$ using (iv) and \eqref{meglio}, we have that $e= \frac{\deg_H(Z)}{d}$ is as stated in (vi).
\end{proof}

With the above lemma at hand, we now show Theorem \ref{ci}.

\renewcommand{\proofname}{Proof of Theorem \ref{ci}}
\begin{proof}
First, we dispose of the case $r=1$. Since $\Pic(X) \cong \Z H$ by Lefschetz's theorem, let $b \in \Z$ and let $\O_X(b)$ be an Ulrich bundle with respect to $(X,\O_X(a))$. Then $H^0(\O_X(b-a))=H^m(\O_X(b-ma))=0$, that is, by Lemma \ref{genci}(i), $H^0(\O_X(b-a))=H^0(\O_X(S-s-m-1-b+ma))=0$. But this gives the contradiction $m(a-1)+S-s \le b \le a-1$. 

Hence, from now on, we can assume that $r \in \{2, 3\}$.

Note that by adding some $1$'s to $(d_1, \ldots, d_s)$, we can always assume that $s \ge 4$.

Suppose that we have an Ulrich bundle of rank $r$ for $(X,\O_X(a))$ and consider the Veronese embedding $v_a(X) \subset \P^N$. If $m \ge 5$, taking hyperplane sections and using Lemma \ref{ulr}(i), it will be enough to show that, on the $4$-dimensional section of $v_a(X)_4$ of $v_a(X)$, there are no Ulrich bundles of rank $r$. 

Now $v_a(X)_4$ is isomorphic to a smooth complete intersection $\widetilde X \subset \P^{4+ \tilde s}$ of type $(d_1, \ldots, d_{\tilde s})=(d_1, \ldots, d_s, a, \ldots, a)$, with $\tilde s=s+m-4$ and we have an Ulrich bundle of rank $r$ for $(\widetilde X,\O_{\widetilde X}(a))$.

With an abuse of notation, let us call again $X$ the above $4$-dimensional section $\widetilde X$ and $(d_1, \ldots, d_s)$ its degrees. Hence we have that $X \subset \P^{4+s}$ is a smooth complete intersection of hypersurfaces of degrees $(d_1, \ldots, d_s)$ with $s \ge 4, d_i \ge 1, 1 \le i \le s$.

Let $\E$ be an Ulrich bundle of rank $r$ for $(X,\O_X(a))$ and let $Z \subset X$ be the associated smooth irreducible surface by Lemma \ref{genci}. 

Observe that, under hypothesis (a) (respectively (b), resp. (c)) of the theorem, we have that condition (3) (respectively (2), resp. (4)) of Lemma \ref{genci} hold. In any case, we deduce that Lemma \ref{genci}(vi) holds.

Let $H \in |\O_X(1)|$ and set $H_Z = H_{|Z}$. 

Assume that $r=2$.

By Lemma \ref{genci}(iii), (iv), (v) and (vi) we see that 
\begin{equation}
\label{u}
u=5(a-1)+S-s
\end{equation}
\begin{equation}
\label{e}
e=\frac{1}{12}(70-150a+80a^2+29s-30as+3s^2-30S+30aS-6sS+4S^2-2S')
\end{equation}
\begin{equation}
\label{deg}
\deg_H(Z)=H_Z^2=\frac{d}{12}(70-150a+80a^2+29s-30as+3s^2-30S+30aS-6sS+4S^2-2S')
\end{equation}
and
$$\begin{aligned}[t]
\chi(\O_Z)=& 1-\frac{d}{12}(u-a) \cdots (u-4a)+ (-1)^s\binom{u-1}{s+4}+ \\ & \hskip -3cm + \sum_{k=1}^s(-1)^{k+s}\sum_{1\le i_1<\ldots<i_k\le s}\left[\binom{d_{i_1}+\ldots+d_{i_k}-1}{s+4}+\binom{d_{i_1}+\ldots+d_{i_k}+u-1}{s+4}\right]
\end{aligned}.$$
In the notation \eqref{effe} of the functions in the appendix, this is just
\begin{equation}
\label{prima}
\chi(\O_Z)=f_{a,4,s,2,0}(d_1,\ldots,d_s).
\end{equation}
Next, we have by Lemma \ref{zeta}(ii), \eqref{u} and Lemma \ref{genci}(i) that
\begin{equation}
\label{k}
K_Z=[2S-2s+5(a-2)]H_Z
\end{equation}
so that
\begin{equation}
\label{k2}
K_Z^2=(100-100a+25a^2+40s-20as+4s^2-40S+20aS-8sS+4S^2)\deg_H(Z).
\end{equation}
Using Lemma \ref{zeta}(ii), Lemma \ref{genci}(i)-(ii), \eqref{u}, \eqref{e},  \eqref{k} and \eqref{k2} we get
\begin{equation}
\label{c2}
c_2(Z)=\frac{1}{12}(650-750a+220a^2+265s-150as+27s^2-270S+150aS-54sS+32S^2-10S')\deg_H(Z).
\end{equation}
Then Noether's formula $Z$, $\chi(\O_Z)=\frac{1}{12}[K_Z^2+c_2(Z)]$ gives, using \eqref{deg}, \eqref{k2} and \eqref{c2}, that
$$\begin{aligned}[t]
\chi(\O_Z)=\frac{5d}{1728}[&25900-82800a+95380a^2-46800 a^3+8320a^4+21160s-50220as+38336a^2s\\
& -9360a^3s+6481s^2-10152as^2+3852a^2s^2+882s^3-684as^3+45s^4-21600S+\\
&+50760aS-38520a^2S+9360a^3S-13140sS+20412asS-7704a^2sS-2664s^2S+\\
&+2052as^2S-180s^3S+7100S^2-10800aS^2+4036a^2S^2+2860sS^2-2160asS^2+\\
&+288s^2S^2-1080S^3+792aS^3-216sS^3+64S^4-880S'+1080aS'-368a^2S'\\
& -356sS'+216asS'-36s^2S'+360SS'-216aSS'+72sSS'-40S^2S'+4(S')^2].
\end{aligned}$$
In the notation \eqref{gi} of the appendix, this is just
\begin{equation}
\label{seconda}
\chi(\O_Z)=g_{a,4,s}(d_1,\ldots,d_s).
\end{equation}
Therefore \eqref{prima} and \eqref{seconda} imply that 
$$g_{a,4,s}(d_1,\ldots,d_s)-f_{a,4,s,2,0}(d_1,\ldots,d_s)=0$$
that is, by Lemma \ref{gl4}(1) of the appendix,
$$m_{1^s}(s)(d_1,\ldots,d_s)v_{s,a,8}(d_1,\ldots,d_s)=0$$
or, equivalently,
$$dv_{s,a,8}(d_1,\ldots,d_s)=0$$
contradicting Lemma \ref{cg}.

Next, assume that $r=3$.

By Riemann-Roch we see that
\begin{equation}
\label{hk}
K_ZH_Z=-2\chi(\O_Z(1))+2\chi(\O_Z)+\deg_H(Z).
\end{equation}
Now Lemma \ref{genci} gives, in the notation \eqref{effe} of the functions in the appendix, that 
\begin{equation}
\label{chi}
\chi(\O_Z(\ell))=f_{a,4,s,3,\ell}(d_1,\ldots,d_s)
\end{equation}
and that
$$\deg_H(Z)=\frac{d}{8}(145-300a+155a^2+59s-60as+6s^2+(-60+60a-12s)S+7S^2-2S')$$
that is, in the notation \eqref{gi}, that
\begin{equation}
\label{deg2}
\deg_H(Z)=\delta_s(d_1,\ldots,d_s)
\end{equation}
and therefore, in the notation \eqref{gi}, \eqref{hk} becomes
\begin{equation}
\label{ack}
K_ZH_Z=-2f_{a,4,s,3,1}(d_1,\ldots,d_s)+2f_{a,4,s,3,0}(d_1,\ldots,d_s)+\delta_s(d_1,\ldots,d_s)=h_s(d_1,\ldots,d_s).
\end{equation}
On the other hand, we have by \cite[Rmk.~4.3(ix)]{lr2} and Lemma \ref{genci} that $[K_Z-\frac{5}{2}(S-s+3a-5)H_Z]^2=0$, so that, using \eqref{deg2}, \eqref{ack} and the notation \eqref{gi}, we get
\begin{equation}
\label{k2'}
\begin{aligned}
K_Z^2 &= 5(S-s+3a-5)K_ZH_Z-\frac{25}{4}(S-s+3a-5)^2\deg_H(Z)=\\
& =5(S-s+3a-5)h_s(d_1,\ldots,d_s)-\frac{25}{4}(S-s+3a-5)^2\delta_s(d_1,\ldots,d_s)= k_s(d_1,\ldots,d_s).
\end{aligned}
\end{equation}
Next, we get by Lemma \ref{zeta}(iii) and Lemma \ref{genci}(i)-(iii) and (vi), using also the notation \eqref{gi}, that
\begin{equation}
\label{c2z'}
\begin{aligned}
c_2(Z) & = \frac{1}{8}[-1315+1800a-605a^2-523s+360as-52s^2+520S-360aS+104sS-49S^2-6 S']H_Z^2\\
& +(4S-4s-20+15a)K_ZH_Z=\\
& = \frac{1}{8}[-1315+1800a-605a^2-523s+360as-52s^2+520S-360aS+104sS\\
& \hskip .8cm -49S^2-6 S']\delta_s(d_1,\ldots,d_s)+(4S-4s-20+15a)h_s(d_1,\ldots,d_s)=\\
& = c_s(d_1,\ldots,d_s).
\end{aligned}
\end{equation}
Hence \eqref{k2'}, \eqref{c2z'} and Noether's formula, using also the notation \eqref{gi},  give
\begin{equation}
\label{secondachi}
\chi(\O_Z)=\frac{1}{12}(K_Z^2+c_2(Z))=\frac{k_s(d_1,\ldots,d_s)+c_s(d_1,\ldots,d_s)}{12}=\chi'_s(d_1,\ldots,d_s).
\end{equation}
Thus we get, by \eqref{chi} and \eqref{secondachi} we have that 
$$\chi'_s(d_1,\ldots,d_s)-f_{a,4,s,3,0}(d_1,\ldots,d_s)=0$$
that is, using Lemma \ref{gl4}(2),
$$m_{1^s}(s)(d_1,\ldots,d_s)v_{s,a,9}(d_1,\ldots,d_s)=0$$
or, equivalently,
$$dv_{s,a,9}(d_1,\ldots,d_s)=0$$
contradicting Lemma \ref{cg}.

This concludes the proof in the case $r=3$ and therefore also ends the proof of the theorem.
\end{proof}
\renewcommand{\proofname}{Proof}

\section{Proof of Theorem \ref{Veronese}}

In this section we prove our main theorem. 

\renewcommand{\proofname}{Proof of Theorem \ref{Veronese}}
\begin{proof}
Let $\overline \E$ be an Ulrich bundle of rank $r \le 3$ for $(\P^n,\O_{\P^n}(a))$. By Theorem \ref{ci}(b) we see that the case $n=4$ cannot occur. Hence we assume from now on that $n \ge 5$.

We can consider $\overline \E$ as an Ulrich bundle for $(v_a(\P^n), \O_{v_a(\P^n)}(1))$, where $v_a(\P^n) \subset \P^N$ is the $a$-Veronese embedding. Choosing $n-4$ general hyperplanes $H_i$ in $\P^N$, we get, by Lemma \ref{ulr}(i), a rank $r$ Ulrich bundle $\E'=\overline \E_{|X'}$ on $X'=v_a(\P^n)\cap H_1 \cap \ldots \cap H_{n-4}$ with respect to $\O_{X'}(1)=\O_{\P^N}(1)_{|X'}$. On the other hand, $X'$ is isomorphic to a general $4$-dimensional smooth complete intersection $X \subset \P^n$ of type $(a, \ldots, a)$ and, via this isomorphism, $\E'$ corresponds to a rank $r$ Ulrich bundle $\E$ on $X$ with respect to $\O_X(a)$. We have then obtained a nonempty open subset $U$ in the parameter space $M$ of complete intersections $X \subset \P^n$ of type $(a, \ldots, a)$. Since $U$ cannot be contained in a countable union of proper closed subvarieties of $M$, we deduce that we can find a very general $4$-dimensional smooth complete intersection $X \subset \P^n$ of type $(a, \ldots, a)$ carrying a rank $r$ Ulrich bundle $\E$ with respect to $\O_X(a)$. 

Therefore, setting $s=n-4$, we have that $X \subset \P^{4+s}$ is a very general $4$-dimensional smooth complete intersection of type $(d_1, \ldots, d_s)=(a, \ldots, a)$ carrying a rank $r$ Ulrich bundle $\E$ with respect to $\O_X(a)$.

Then we get a contradiction by Theorem \ref{ci} unless $a=2$ and $s=1, 2$, that is $n=5, 6$. But in the latter two cases we have a contradiction by \cite[Cor.~5.3]{es}.

This concludes the proof.
\end{proof}
\renewcommand{\proofname}{Proof}

\null\vfill

\eject

\appendix
\section{Symmetric functions associated to Veronese embeddings of complete intersections}
\label{app}

Given a smooth complete intersection $X \subset \P^{m+s}$ of hypersurfaces of degrees $(d_1, \ldots, d_s)$, a rank $r \ge 2$ Ulrich vector bundle $\E$ on $X$ with respect to $\O_X(a)$ and an Ulrich subvariety $Z$ associated to $\E$, we have some natural symmetric functions of $(d_1, \ldots, d_s)$ as in Lemma \ref{genci} and as in the proof of Theorem \ref{ci}. In this section we will lay out the necessary calculations related to them. Several calculations have been performed by Mathematica. The corresponding codes can be found in \cite{lr3}.

\begin{defi}
\label{effe}
Given integers $m \ge 1, s \ge 1, r \ge 2, \ell$, consider the polynomials in $\Q[x_1,\ldots, x_s]$ given by 

$$a_s(\ell, x_1, \ldots, x_s) = \binom{\ell+m+s}{m+s}+\sum_{k=1}^s(-1)^{k+m+s}\sum_{1\le i_1<\ldots<i_k\le s}\binom{x_{i_1}+\ldots+x_{i_k}-\ell-1}{m+s}$$
and
$$b_s(x_1, \ldots, x_s)=(-1)^{m+1}\frac{r}{m!}(\prod_{i=1}^sx_i)\prod\limits_{j=1}^m\left[\frac{r}{2}[(m+1)(a-1)+\sum\limits_{i=1}^s x_i-s]-\ell-ja\right]$$
Next, we set 
$$f_{a,m,s,r,\ell}=a_s(\ell, x_1, \ldots, x_s)+(r-1)a_s(\ell-\frac{r}{2}[(m+1)(a-1)+\sum\limits_{i=1}^s x_i-s], x_1, \ldots, x_s)+b_s(x_1, \ldots, x_s).$$
\end{defi}
Explicitly we have
$$\begin{aligned}[t]
& f_{a,m,s,r,\ell}= \binom{\ell+m+s}{m+s}+(-1)^{m+1}\frac{r}{m!}(\prod_{i=1}^sx_i)\prod\limits_{j=1}^m\left[\frac{r}{2}[(m+1)(a-1)+\sum\limits_{i=1}^s x_i-s]-\ell-ja\right]+\\
& + (-1)^{m+s}(r-1)\binom{\frac{r}{2}[(m+1)(a-1)+\sum\limits_{i=1}^s x_i-s]-\ell-1}{m+s}+\\
& +\sum_{k=1}^s(-1)^{k+m+s}\sum_{1\le i_1<\ldots<i_k\le s}\binom{x_{i_1}+\ldots+x_{i_k}-\ell-1}{m+s}+\\
& +(r-1)\sum_{k=1}^s(-1)^{k+m+s}\sum_{1\le i_1<\ldots<i_k\le s}\binom{x_{i_1}+\ldots+x_{i_k}+\frac{r}{2}[(m+1)(a-1)+\sum\limits_{i=1}^s x_i-s]-\ell-1}{m+s}.
\end{aligned}$$

\begin{lemma}
\label{tf0}
\null \hskip 1cm
\begin{itemize}
\item[(1)] $f_{a,m,s,r,\ell}$ is symmetric in $x_1,\ldots, x_s$.
\item[(2)] For any $1 \le k \le s$, the following identity holds in $\Q[x_1,\ldots,x_k]$: 
$$f_{a,m,k,r,\ell}(x_1, \ldots, x_k) = f_{a,m,s,r,\ell}(x_1,\ldots, x_k,1, \ldots, 1).$$
\item[(3)] $x_i \mid f_{a,m,s,r,\ell}$ for all $1\le i\le s$.
\end{itemize}
\end{lemma}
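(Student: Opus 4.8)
The plan is to reduce all three statements to a Koszul-type rewriting of the summand $a_s(\ell,x_1,\ldots,x_s)$. Using the convention $\binom{-N}{M}=(-1)^M\binom{N+M-1}{M}$ recorded in the preliminaries, one has the identity of binomials $(-1)^{k+m+s}\binom{t-\ell-1}{m+s}=(-1)^{k}\binom{\ell+m+s-t}{m+s}$, and hence
\[
a_s(\ell,x_1,\ldots,x_s)=\sum_{I\subseteq\{1,\ldots,s\}}(-1)^{|I|}\binom{\ell+m+s-\sum_{i\in I}x_i}{m+s},
\]
where the sum runs over all subsets $I$ and $I=\emptyset$ contributes the term $\binom{\ell+m+s}{m+s}$. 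In this form the symmetry of $a_s$ in $x_1,\ldots,x_s$ is immediate, since a permutation of the variables merely permutes the subsets $I$. As $\sum_i x_i$ and $\prod_i x_i$ are symmetric, so are $u_s:=\tfrac r2\big[(m+1)(a-1)+\sum_i x_i-s\big]$ and $b_s$; and since for every value $v$ the polynomial $a_s(v,x_1,\ldots,x_s)$ is symmetric in $x_1,\ldots,x_s$, the middle summand $a_s(\ell-u_s,x_1,\ldots,x_s)$ of $f_{a,m,s,r,\ell}$ is symmetric as well. This proves (1).

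For (2) I would first establish the one-variable-at-a-time identity $a_s(\ell,x_1,\ldots,x_{s-1},1)=a_{s-1}(\ell,x_1,\ldots,x_{s-1})$ and then iterate. Setting $x_s=1$ in the display above and pairing each $I\subseteq\{1,\ldots,s-1\}$ with $I\cup\{s\}$, the two corresponding terms combine, by Pascal's rule $\binom{n}{M}-\binom{n-1}{M}=\binom{n-1}{M-1}$, into $(-1)^{|I|}\binom{\ell+m+(s-1)-\sum_{i\in I}x_i}{m+(s-1)}$, which is precisely the $I$-term of $a_{s-1}$; iterating gives $a_s(\ell,x_1,\ldots,x_k,1,\ldots,1)=a_k(\ell,x_1,\ldots,x_k)$ as an identity of polynomials in $\ell,x_1,\ldots,x_k$. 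Since in addition $\sum_{i=1}^s x_i-s$ and $\prod_{i=1}^s x_i$ specialize to $\sum_{i=1}^k x_i-k$ and $\prod_{i=1}^k x_i$ when $x_{k+1}=\cdots=x_s=1$, one obtains $u_s(x_1,\ldots,x_k,1,\ldots,1)=u_k(x_1,\ldots,x_k)$ and $b_s(x_1,\ldots,x_k,1,\ldots,1)=b_k(x_1,\ldots,x_k)$. Plugging these into $f_{a,m,s,r,\ell}=a_s(\ell,x_1,\ldots,x_s)+(r-1)a_s(\ell-u_s,x_1,\ldots,x_s)+b_s(x_1,\ldots,x_s)$ then yields the desired identity; in the middle summand one first replaces $u_s$ by $u_k$ and then applies the polynomial identity for $a_s$ with first argument the symmetric polynomial $\ell-u_k(x_1,\ldots,x_k)$.

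For (3), by (1) it is enough to show that $f_{a,m,s,r,\ell}$ vanishes at $x_s=0$. The summand $b_s$ carries the factor $\prod_{i=1}^s x_i$, so it vanishes there. Setting $x_s=0$ in the Koszul form of $a_s$ and pairing each $I\subseteq\{1,\ldots,s-1\}$ with $I\cup\{s\}$, the two terms become equal up to sign (because $\sum_{i\in I\cup\{s\}}x_i=\sum_{i\in I}x_i$) and cancel, so $a_s(\ell,x_1,\ldots,x_{s-1},0)=0$ identically in $\ell$; the middle summand $(r-1)a_s(\ell-u_s,x_1,\ldots,x_{s-1},0)$ vanishes for the same reason. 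Hence $f_{a,m,s,r,\ell}$ vanishes at $x_s=0$, and $x_i\mid f_{a,m,s,r,\ell}$ for every $i$ follows from the symmetry established in (1).

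I expect these computations to be routine once the Koszul reformulation is available. The one point that deserves care is the substitution in the first argument of the middle summand in (2): there one uses that $a_s(v,x_1,\ldots,x_k,1,\ldots,1)=a_k(v,x_1,\ldots,x_k)$ is an identity of polynomials in $v$, which therefore remains valid after specializing $v$ to $\ell-u_k(x_1,\ldots,x_k)$; everything else reduces to Pascal's rule and the sign convention for binomial coefficients with negative upper entry.
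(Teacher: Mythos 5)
Your argument is correct: the rewriting of $a_s$ as the alternating sum $\sum_{I}(-1)^{|I|}\binom{\ell+m+s-\sum_{i\in I}x_i}{m+s}$ (valid as a polynomial identity by the sign convention $\binom{-\ell}{m}=(-1)^m\binom{\ell+m-1}{m}$), together with Pascal's rule for the specialization $x_s=1$ and the pairwise cancellation at $x_s=0$, establishes (1)--(3), and you correctly flag the only delicate point, namely that the specialization identity for $a_s$ holds identically in the first argument and hence may be applied with $v=\ell-u_k$. The paper itself only cites [LR2, Lemmas A.2, A.3] for this, and your proof is a faithful self-contained version of that argument.
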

\begin{proof}
Same as \cite[Lemmas A.2, A.3]{lr2}.
\end{proof}

We will now express the symmetric polynomials $f_{a,m,s,r,\ell}$ in terms of monomial symmetric polynomials. For this we will use some properties of them, for which we refer for example to \cite[\S 1]{eg}.

\begin{defi}
Let $s \ge 1$ be an integer and let $x_1,\ldots, x_s$ be indeterminates. Given a partition $\lambda = \{\lambda_1, \ldots, \lambda_k\}$ with $\lambda_1 \ge \lambda_2 \ge \ldots \ge \lambda_k \ge 1$, if $k \le s$ we let $m_{\lambda}(s)$ be the monomial symmetric polynomial in $x_1,\ldots,x_s$ corresponding to $\lambda$, while if $k > s$ we set $m_{\lambda}(s)=0$.
\end{defi}
We will also write $m_{\lambda}(s)=m_{\lambda_1 \ldots \lambda_k}$. We denote by $\{1^k\}$ the partition $\{1,\ldots,1\}$ of $k$ and we set $m_{1^0}(s)=1$. For example
$$m_h(s)=\sum\limits_{i=1}^s x_i^h \ \hbox{for} \ h \ge 1 \ \hbox{and} \ m_{1^s}(s)=\prod_{i=1}^sx_i.$$

We will consider below the following $\Q$-basis of the vector space of symmetric polynomials with rational coefficients and of degree at most $4$ in $s$ variables:
\begin{equation} 
\label{base}
e=\{m_4(s), m_{31}(s), m_{22}(s), m_{211}(s), m_{1111}(s), m_3(s), m_{21}(s), m_{111}(s), m_2(s), m_{11}(s), m_1(s), 1\}.
\end{equation}
We can now express any of the functions $f \in \{f_{a,4,s,2,0}, f_{a,4,s,3,0}, f_{a,4,s,3,1}\}$ in terms of the above basis as
\begin{equation}
\label{inbase}
\begin{aligned} 
f =  \frac{m_{1^s}(s)}{M}[& a_1m_4(s)+a_2m_{31}(s)+a_3m_{22}(s)+a_4m_{211}(s)+a_5m_{1111}(s)+a_6m_3(s)+a_7m_{21}(s)+\\
& +a_8m_{111}(s)+a_9m_2(s)+a_{10}m_{11}(s)+a_{11}m_1(s)+a_{12}]
\end{aligned}
\end{equation}
with the coefficients given by the following
\begin{lemma}
\label{gl1} 
Let $f \in \{f_{a,4,s,2,0}, f_{a,4,s,3,0}, f_{a,4,s,3,1}\}$. For all $s \ge 4$ the coefficients in \eqref{inbase} are:
\begin{itemize}
\item[(i)] For $f_{a,4,s,2,0}$ we have $M=360$ and
\begin{enumerate}
\item $a_1 = 66, a_2 = 225, a_3 = 320, a_4 = 600, a_5 = 1125$
\item $a_6 = 75(-15+11a-3s), a_7 = 150(-20+15a-4s)$
\item $a_8 = 225(-25+19a-5s)$
\item $a_9 = 10(740-1125a+420a^2+298s-225as+30s^2)$
\item $a_{10} = \frac{75}{2}(370-570a+214a^2+149s-114as+15s^2)$
\item $a_{11} = \frac{75}{2}(-600+1410a-1070a^2+260a^3-365s+567as-214a^2s-74s^2+57as^2-5s^3)$
\item $\begin{aligned}[t] a_{12} = \frac{1}{8}(&215760-690000a+795000a^2-390000a^3+69240a^4+176302s-418500as\\
& +319500a^2s-78000a^3s+54005s^2-84600as^2+32100a^2s^2+7350s^3-5700as^3+\\
& +375s^4).
\end{aligned}$
\end{enumerate}
\item[(ii)] For $f_{a,4,s,3,0}$ we have $M=1920$ and
\begin{enumerate}
\item $a_1=1683, a_2=6060, a_3=8770, a_4=16860, a_5=32400$
\item $a_6=-30300+24600a-6060s, a_7=-84300+69000a-16860s$
\item $a_8=-162000+133200a-32400 s$
\item $a_9=209050-345000a+140850a^2+83960s-69000as+8430s^2$
\item $a_{10}=401700-666000a+272700a^2+161340s-133200as+16200s^2$
\item $\begin{aligned}[t]
a_{11}=& -658500+1653000a-1363500a^2+369000a^3-398400s+663600as-272700a^2s \\
& -80340s^2+66600as^2-5400s^3
\end{aligned}$
\item $\begin{aligned}[t]
a_{12}=& 802635-2715000a+3386250a^2-1845000a^3+371115a^4+650302s-1641000as \\
& +1359000a^2s-369000a^3s+197555s^2-330600as^2+136350a^2s^2+26670s^3 \\
& -22200as^3+1350s^4.
\end{aligned}$
\end{enumerate}
\item[(iii)] For $f_{a,4,s,3,1}$ we have $M=1920$ and
\begin{enumerate}
\item $a_1=1683, a_2=6060, a_3=8770, a_4=16860, a_5=32400$
\item $a_6=-32580+24600a-6060 s, a_7=-90420+69000a-16860s$
\item $a_8=-173520+133200a-32400s$
\item $a_9=240490-371400a+140850a^2+90080s-69000as+8430s^2$
\item $a_{10}=460740-716400a+272700a^2+172860s-133200as+16200s^2$
\item $\begin{aligned}[t]
a_{11}=& -807900+1912200a-1473300a^2+369000a^3-457080s+714000as-272700a^2s \\
& -86100s^2+66600as^2-5400s^3
\end{aligned}$
\item $\begin{aligned}[t]
a_{12}=& 1051035-3375000a+3953850a^2-2001000a^3+371115a^4+797782s-1899000as \\
& +1468800a^2s-369000a^3s+226715s^2-355800as^2+136350a^2s^2+28590s^3 \\
& -22200as^3+1350s^4.
\end{aligned}$
\end{enumerate}
\end{itemize}
\end{lemma}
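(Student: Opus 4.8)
The plan is to compute, for each of the three functions $f \in \{f_{a,4,s,2,0}, f_{a,4,s,3,0}, f_{a,4,s,3,1}\}$, the expansion \eqref{inbase} by a completely explicit (if lengthy) symmetric-function calculation, and then certify the stated coefficients. First I would recall from Lemma \ref{tf0} that $f$ is symmetric in $x_1,\dots,x_s$ and divisible by $m_{1^s}(s) = \prod_i x_i$; hence $f/m_{1^s}(s)$ is a genuine symmetric polynomial, and since the binomial terms $\binom{\bullet}{m+s} = \binom{\bullet}{s+4}$ each contribute (after clearing the $\binom{}{}$ and the leading power-sum cancellations) a polynomial of bounded degree, one checks that $f/m_{1^s}(s)$ has degree at most $4$ in the $x_i$. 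Therefore it lies in the span of the twelve monomial symmetric polynomials listed in \eqref{base}, which is the $\Q$-basis $e$, and \eqref{inbase} holds for \emph{some} rational coefficients $a_1,\dots,a_{12}$ (depending on $a$ and $s$); the content of the lemma is the identification of these coefficients.

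The key device is Lemma \ref{tf0}(2): setting all but $k$ of the variables equal to $1$ recovers $f_{a,m,k,r,\ell}(x_1,\dots,x_k)$. Because $f/m_{1^s}(s)$ is a polynomial of degree $\le 4$, its expansion in the basis $e$ is determined by its specializations to few variables. Concretely, I would proceed by increasing the number of active variables: the specialization to one variable $f_{a,4,1,r,\ell}(x_1)/x_1$ determines the coefficients of $m_4, m_3, m_2, m_1, 1$ (the "single-variable" part, i.e.\ the sum of $a_1, a_6, a_9, a_{11}, a_{12}$-contributions appropriately); passing to two variables and subtracting off the already-known part pins down $a_2$ (coeff.\ of $m_{31}$), $a_3$ ($m_{22}$), $a_7$ ($m_{21}$), $a_{10}$ ($m_{11}$); three variables then gives $a_4$ ($m_{211}$) and $a_8$ ($m_{111}$); and four variables gives $a_5$ ($m_{1111}$). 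Since the lemma is stated for $s \ge 4$, four active variables suffice, and the stabilization in $s$ is exactly Lemma \ref{tf0}(2) again, with the residual $s$-dependence of the $a_i$ arising only through the explicit appearance of $s$ in $b_s$ and in the upper summation bounds (which is why $a_6,\dots,a_{12}$ carry $s$ while $a_1,\dots,a_5$ do not). In practice this entire extraction is what Mathematica performs, and I would present the output as the stated coefficient lists, referring to \cite{lr3} for the code, exactly in the spirit of \cite[Lemmas A.2, A.3]{lr2}.

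The genuinely routine — but error-prone — part is the bookkeeping of the binomial coefficients $\binom{\bullet}{s+4}$, where the arguments involve the linear form $\frac{r}{2}[(m+1)(a-1)+\sum_i x_i - s]$ with $m=4$: one must expand $\binom{N+L}{s+4}$ as a polynomial in $L$ (a linear form in the $x_i$) with coefficients that are themselves polynomials in the "constant" $N$ (depending on $a$ and $s$), then collect by monomial symmetric type. The factor $b_s$, a product of $4$ linear forms times $m_{1^s}(s)$, similarly must be re-expanded. I expect the main obstacle to be organizing this expansion so that the $r=2$ versus $r=3$ cases and the $\ell=0$ versus $\ell=1$ cases are handled uniformly (note $r$ enters both through the prefactors and through the shift in the argument of $a_s$), and verifying that the degree really does drop to $\le 4$ after the cancellation of the leading power-sum terms — i.e.\ that the coefficients of $m_5, m_{41},\dots$ all vanish — which is the concrete manifestation of $f$ being a polynomial Hilbert-type function of the expected degree. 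Once that is in place, the coefficient lists (i), (ii), (iii) follow by direct comparison.
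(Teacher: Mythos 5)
Your proposal is correct and follows essentially the same route as the paper: both arguments use Lemma \ref{tf0}(1),(3) and the degree bound to write $f=\frac{m_{1^s}(s)}{M}p_s$ with $p_s$ in the span of the basis \eqref{base}, then exploit the specialization identity of Lemma \ref{tf0}(2) to reduce the determination of the $a_j$ to an explicit Mathematica computation in four variables. The only (cosmetic) difference is organizational: you extract the coefficients incrementally by increasing the number of active variables, whereas the paper computes $f_{a,4,4,r,\ell}$ outright and solves a single linear system obtained by setting $x_5=\dots=x_s=1$ via \cite[Lemma A.6]{lr2}.
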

\begin{proof}
We sketch the proof, since it is similar to the one in \cite[Lemma A.7]{lr2}.

By Lemma \ref{tf0}(1) and (3) we see that there exists a symmetric polynomial $p_s \in \Q[x_1,\ldots,x_s]$ of degree at most $4$ such that
$$f=\frac{m_{1^s}(s)}{M}p_s.$$
Thus, we can express $p_s$ through the basis \eqref{base} as follows:
\begin{equation}
\label{bas}
\begin{split}
p_s=& a_1m_4(s)+a_2m_{31}(s)+a_3m_{22}(s)+a_4m_{211}(s)+a_5m_{1111}(s)+ a_6m_3(s)+\\
& +a_7m_{21}(s)+a_8m_{111}(s)+a_9m_2(s)+a_{10}m_{11}(s)+a_{11} m_1(s)+a_{12}
\end{split}
\end{equation}
with $a_1,\ldots,a_{12}\in \Q$. By Lemma \ref{tf0}(2), we have that
\begin{equation}
\label{g1-bis}
p_4(x_1,\ldots, x_4)= p_s(x_1,\ldots, x_4, 1 \ldots, 1).
\end{equation}
On the other hand, direct calculations show that:
\begin{equation}
\label{g2}
\begin{split}
f_{a,4,4,2,0} = \frac{m_{1^4}(4)}{360}[& 66m_4(4)+225m_{31}(4)+320m_{22}(4)+600m_{211}(4)+1125m_{1111}(4) \\
& +(825a-2025)m_3(4)+(2250a-5400)m_{21}(4)+(4275a-10125)m_{111}(4) \\
& +(4200a^2-20250a+24120)m_2(4)+(8025a^2-38475a+45225)m_{11}(4)\\
& +(9750a^3-72225a^2+172125a-133650)m_1(4)+8655a^4-87750a^3+323325a^2\\
& -510300a+293931].
\end{split}
\end{equation}
\begin{equation}
\label{g2-1}
\begin{split}
f_{a,4,4,3,0}=&\frac{m_{1^4}(4)}{1920}[1683m_{4}(4)+6060m_{31}(4)+8770m_{22}(4)+16860m_{211}(4)+32400m_{1111}(4)\\
& +(-54540+24600a)m_{3}(4)+(-151740+69000a)m_{21}(4)+(-291600+133200a)m_{111}(4)\\
& +(679770-621000a+140850a^2)m_{2}(4)+(1306260-1198800a+272700a^2)m_{11}(4)\\
& +(-3883140+5373000a-2454300a^2+369000a^3)m_{1}(4)\\
& +8617203-15989400a+11003850a^2-3321000a^3+371115a^4].
\end{split}
\end{equation}
\begin{equation}
\label{g2-2}
\begin{split}
f_{a,4,4,3,1}=&\frac{m_{1^4}(4)}{1920}[1683m_{4}(4)+6060m_{31}(4)+8770m_{22}(4)+16860m_{211}(4)+32400m_{1111}(4)\\
& +(-56820+24600a)m_{3}(4)+(-157860+69000a)m_{21}(4)+(-303120+133200a)m_{111}(4)\\
& +(735690-647400a+140850a^2)m_{2}(4)+(1411380-1249200a+272700a^2)m_{11}(4)\\
& +(-4359420+5833800a-2564100a^2+369000a^3)m_{1}(4)\\
& +10044963-18084600a+12010650a^2-3477000a^3+371115a^4].
\end{split}
\end{equation}
Hence, replacing $x_5=\ldots=x_s=1$ in \eqref{bas} and using \cite[Lemma A.6]{lr2} for $G=p_s$, we get an expression for $p_s(x_1,\ldots, x_4, 1 \ldots, 1)$ in terms of the basis \eqref{base} whose coefficients must coincide, by \eqref{g1-bis}, with the ones in \eqref{g2}-\eqref{g2-2}. Solving the corresponding linear systems in the $a_j$'s, we get (1)-(7) in (i)-(iii). 
\end{proof}

Consider now the following polynomials in $\Q[x_1,\ldots, x_s]$:

\begin{equation}
\label{gi}
\begin{split}
g_{a,4,s}& = \frac{m_{1^s}(s)}{1728}[ 25900-82800a+95380a^2-46800 a^3+8320a^4+21160s-50220as+38336a^2s\\
& -9360a^3s+6481s^2-10152as^2+3852a^2s^2+882s^3-684as^3+45s^4+(-21600+50760a\\
& -38520a^2+9360a^3-13140s+20412as-7704a^2s-2664s^2+2052as^2-180s^3)m_1(s)\\
& +(7100-10800a+4036a^2+2860s-2160as+288s^2)m_1(s)^2\\
& +(-1080+792a-216s)m_1(s)^3+64m_1(s)^4+(-880+1080a-368a^2-356s+216as\\
& -36s^2)m_{11}(s)+(360-216a+72s)m_1(s)m_{11}(s)-40m_1(s)^2m_{11}(s)+4m_{11}(s)^2]\\
\delta_s=&\frac{m_{1^s}(s)}{8}[145-300a+155a^2+59s-60as+6s^2+(-60+60a-12s)m_1(s)+7m_1(s)^2\\
& \hskip 1.2cm -2m_{11}(s)]\\
h_s=& -2f_{a,4,s,3,1}+2f_{a,4,s,3,0}+\delta_s\\
k_s=& 5(m_1(s)-s+3a-5)-5)h_s -\frac{25}{4}(m_1(s)-s+3a-5)^2\delta_s\\
c_s=& \frac{1}{8}[-1315+1800a-605a^2-523s+360as-52s^2+(520-360a+104s)m_1(s)\\
& \hskip .3cm -49m_1(s)^2-6m_{11}(s)]\delta_s +[4m_1(s)-4s-20+15a]h_s\\
\chi'_s=& \frac{k_s+c_s}{12}.
\end{split}
\end{equation}
Then we have
\begin{lemma}
\label{gl2} 
For all $s \ge 1$ the following identities hold:
\begin{enumerate}
\item $\begin{aligned}[t]
g_{a,4,s} = & \frac{5m_{1^s}(s)}{1728}[64m_4(s)+216m_{31}(s)+308m_{22}(s)+576m_{211}(s)+1080m_{1111}(s)+\\
& (-1080+792a-216s)m_3(s)+(-2880+2160a-576s)m_{21}(s)+\\
& (-5400+4104a-1080s)m_{111}(s)+\\
& +(7100-10800a+4036a^2+2860s-2160as+288s^2)m_2(s)+\\
& (13320-20520a+7704a^2+5364s-4104as+540s^2)m_{11}(s)+\\
& (-21600+50760a-38520a^2+9360a^3-13140s+20412as-7704a^2s-2664s^2+\\
& + 2052as^2-180s^3)m_1(s)+25900-82800a+95380a^2-46800a^3+8320a^4+21160s\\
& -50220as+38336a^2s-9360a^3s+6481s^2-10152as^2+3852a^2s^2+882s^3 
-684as^3+\\
& +45s^4]
\end{aligned}$
\item $\delta_s=\frac{m_{1^s}(s)}{8}[145-300a+155a^2+59s-60as+6s^2+(-60+60a-12s)m_{1}(s)+7m_{2}(s)+12m_{11}(s)]$
\item $\begin{aligned}[t]
h_s =& \frac{m_{1^s}(s)}{8}[19m_3(s)+51m_{21}(s)+96m_{111}(s)+(-255+220a-51s)m_2(s)\\
& +(-480+420a-96s)m_{11}(s)+(1185-2100a+915a^2+477s-420as+48s^2)m_1(s)\\
& -1925+5200a-4575a^2+1300a^3-1170s+2090as-915a^2s-237s^2+210as^2-16s^3]
\end{aligned}$
\item $\begin{aligned}[t]
k_s=& \frac{5m_{1^s}(s)}{32}[41m_4(s)+150m_{31}(s)+218m_{22}(s)+422m_{211}(s)+816m_{1111}(s)\\
& +(-750+598a-150s)m_3(s)+(-2110+1702a-422s)m_{21}(s)\\
& +(-4080+3312a-816s)m_{111}(s)\\ 
&+(5240-8510a+3410a^2+2103s-1702as+211s^2)m_2(s)\\
& +(10130-16560a+6670a^2+4066s-3312as+408s^2)m_{11}(s)\\
& +(-16650+41170a-33350a^2+8830a^3-10060s+16514as-6670a^2s-2026s^2+1656as^2\\
& -136s^3)m_1(s)+20375-67850a+83000a^2-44150a^3+8625a^4+16475s-40940as\\
& +33275a^2s-8830a^3s+4995s^2-8234as^2+3335a^2s^2+673s^3-552as^3+34s^4]
\end{aligned}$
\item $\begin{aligned}[t]
c_s=& \frac{m_{1^s}(s)}{64}[265m_4(s)+924m_{31}(s)+1330m_{22}(s)+2524m_{211}(s)+4800m_{1111}(s)\\
& +(-4620+3860a-924s)m_3(s)+(-12620+10580a-2524s)m_{21}(s)\\
& +(-24000+20160a-4800s)m_{111}(s)\\
& +(31210-52900a+22250a^2+12552s-10580as+1262s^2)m_2(s)\\
& +(59380-100800a+42380a^2+23876s-20160as+2400s^2)m_{11}(s)\\
& +(-96900+249500a-211900a^2+59300a^3-58760s+100300as-42380a^2s-11876s^2\\
& +10080as^2-800s^3)m_1(s)+117325-407500a+524450a^2-296500a^3+62225a^4+95380s\\
& -247000as+210840a^2s-59300a^3s+29073s^2-49900as^2+21190a^2s^2+3938s^3-3360as^3\\
& +200s^4]
\end{aligned}$
\item $\begin{aligned}[t]
\chi'_s=& \frac{m_{1^s}(s)}{768}[675m_4(s)+2424m_{31}(s)+3510m_{22}(s)+6744m_{211}(s)+12960m_{1111}(s)\\
& +(-12120+9840a-2424s)m_3(s)+(-33720+27600a-6744s)m_{21}(s)\\
& +(-64800+53280a-12960s)m_{111}(s)\\
& +(83610-138000a+56350a^2+33582s-27600as+3372s^2)m_2(s)\\
& +(160680-266400a+109080a^2+64536s-53280as+6480s^2)m_{11}(s)\\
& +(-263400+661200a-545400a^2+147600a^3-159360s+265440as-109080a^2s\\
& -32136s^2+26640as^2-2160s^3)m_1(s)+321075-1086000a+1354450a^2-738000a^3\\
& +148475a^4+260130s-656400as+543590a^2s-147600a^3s+79023s^2-132240as^2\\
& +54540a^2s^2+10668s^3-8880as^3+540s^4]. 
\end{aligned}$            
\end{enumerate}
\end{lemma}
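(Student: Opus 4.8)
The plan is to prove the six identities in turn, by expanding the defining expressions in \eqref{gi} and re-expressing everything through the monomial symmetric basis \eqref{base}. The only ingredients are Lemma \ref{gl1}, Lemma \ref{tf0}, and the elementary multiplication rules for monomial symmetric polynomials, all valid in $\Q[x_1,\dots,x_s]$ for every $s\ge 1$: $m_1^2=m_2+2m_{11}$, $m_1m_2=m_3+m_{21}$, $m_1m_3=m_4+m_{31}$, $m_1m_{11}=m_{21}+3m_{111}$, $m_1m_{21}=m_{31}+2m_{22}+2m_{211}$, $m_1m_{111}=m_{211}+4m_{1111}$, $m_2^2=m_4+2m_{22}$, $m_2m_{11}=m_{31}+m_{211}$, $m_{11}^2=m_{22}+2m_{211}+6m_{1111}$, together with the consequences $m_1^3=m_3+3m_{21}+6m_{111}$, $m_1^4=m_4+4m_{31}+6m_{22}+12m_{211}+24m_{1111}$ and $m_1^2m_{11}=m_{31}+2m_{22}+5m_{211}+12m_{1111}$; each of these is proved by counting monomials, in the spirit of the computations in \cite{lr2}.

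For items (1) and (2) this is essentially immediate: by definition in \eqref{gi}, $g_{a,4,s}$ and $\delta_s$ are $m_{1^s}(s)$ times a polynomial in $m_1(s)$ and $m_{11}(s)$ with coefficients in $\Q[a,s]$, so I would substitute the rules above for $m_1(s)^2$, $m_1(s)^3$, $m_1(s)^4$, $m_1(s)m_{11}(s)$, $m_1(s)^2m_{11}(s)$ and $m_{11}(s)^2$ and collect the coefficient of each basis element. For instance $7m_1(s)^2-2m_{11}(s)=7m_2(s)+12m_{11}(s)$, which is the content of (2). This step is uniform in $s\ge 1$.

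For items (3)--(6) one bootstraps. For (3): by Lemma \ref{gl1}(ii) and (iii) the polynomials $f_{a,4,s,3,0}$ and $f_{a,4,s,3,1}$ can be written in the form \eqref{inbase}, and because their degree-$4$ coefficients $a_1,\dots,a_5$ coincide, the combination $-2f_{a,4,s,3,1}+2f_{a,4,s,3,0}$ has no degree-$4$ part; adding $\delta_s$ from (2) and simplifying yields (3), and in particular exhibits $h_s$ as $m_{1^s}(s)$ times a polynomial of degree $\le 3$. This last point is what makes (4) and (5) work: it guarantees that the products $(m_1(s)-s+3a-5)\,h_s$ and $(m_1(s)-s+3a-5)^2\,\delta_s$ occurring in $k_s$, and likewise the term $(4m_1(s)-4s-20+15a)\,h_s$ and the product of $\delta_s$ with the quadratic polynomial in $m_1(s),m_{11}(s)$ multiplying it in $c_s$, all stay within the span of \eqref{base}. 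One then substitutes the expansions of $\delta_s$ and $h_s$ into the definitions of $k_s$ and $c_s$ in \eqref{gi}, expands the products via the multiplication rules, and collects coefficients, to obtain (4) and (5); finally (6) is just $\chi'_s=\tfrac1{12}(k_s+c_s)$, i.e.\ the sum of the right-hand sides of (4) and (5) divided by $12$.

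Two points require care. First, Lemma \ref{gl1} supplies \eqref{inbase} only for $s\ge 4$, so the argument above proves (3)--(6) for $s\ge 4$; for $s\le 3$ one either runs the same argument directly (the basis \eqref{base} then loses its terms with more than $s$ parts, and the needed expansions of the $f$'s are read off from Definition \ref{effe}), or deduces the small-$s$ identities from the case $s\ge 4$ by the substitution $x_i\mapsto 1$ — Lemma \ref{tf0}(2) gives $f_{a,4,k,3,\ell}(x_1,\dots,x_k)=f_{a,4,s,3,\ell}(x_1,\dots,x_k,1,\dots,1)$, and applying the same substitution to $m_{1^s}(s),m_1(s),m_{11}(s)$, hence to every function in \eqref{gi} and to both sides of (1)--(6), shows that the identities for $s\ge 4$ descend to $k\le 3$. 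Second, while each identity is elementary, the bookkeeping is heavy — degree-$4$ symmetric polynomials written through the twelve-element basis \eqref{base}, with coefficients that are themselves degree-$4$ polynomials in $a$ and $s$ — and it is exactly this that is delegated to Mathematica, as recorded in \cite{lr3}. Thus the only real obstacle is organizational: carrying out the product expansions and the many coefficient comparisons without error, and in particular using the degree-$4$ cancellation in step (3) before forming the products that define $k_s$ and $c_s$.
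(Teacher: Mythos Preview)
Your approach matches the paper's: its one-line proof cites \cite[Lemma~A.5]{lr2}, which supplies exactly the monomial-symmetric product rules you list, after which (1)--(2) are direct and (3)--(6) follow by feeding in Lemma~\ref{gl1} and expanding the products as you describe. The only quibble is that your substitution argument for $s\le 3$ tacitly assumes $\delta_s(x_1,\dots,x_k,1,\dots,1)=\delta_k$ and the analogous identity for the right-hand sides, which is true but not automatic since $s$ appears explicitly in the coefficients; your direct alternative sidesteps this cleanly.
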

\begin{proof}
Immediate from \cite[Lemma A.5]{lr2}.
\end{proof}

We wish to compare all of the above functions. 

In order to do this, for any $a, b \in \Z$, define the polynomial $v_{s,a,b} \in \Q[x_1,\ldots,x_s]$ by 
$$v_{s,a,b} =bm_4(s)+10m_{22}(s)+(50a^2-10s-50)m_2(s)-250a^2-50a^2s+5s^2+150+(55-b)s-5b+(100+5b)a^4.$$
We have
\begin{lemma}
\label{gl4} 

For all $s \ge 4$ the following identities hold:
\begin{itemize}
\item[(i)] $g_{a,4,s}-f_{a,4,s,2,0}=\frac{m_{1^s}(s)}{4320}v_{s,a,8}$.
\item[(ii)] $\chi'_s-f_{a,4,s,3,0}=\frac{m_{1^s}(s)}{3840}v_{s,a,9}$.
\end{itemize}
\end{lemma}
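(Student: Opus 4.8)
The statement is an identity between explicit symmetric polynomials, so the plan is to reduce everything to the common monomial-symmetric-polynomial basis \eqref{base} and compare coefficients. The key point is that both sides are already expressed in that basis: $g_{a,4,s}$ in Lemma \ref{gl2}(1), $f_{a,4,s,2,0}$ via Lemma \ref{gl1}(i) and \eqref{inbase}, $\chi'_s$ in Lemma \ref{gl2}(6), and $f_{a,4,s,3,0}$ via Lemma \ref{gl1}(ii) and \eqref{inbase}. Each of these carries the common prefactor $\frac{m_{1^s}(s)}{M}$, so after clearing $m_{1^s}(s)$ it remains to subtract two degree-$\le 4$ polynomials in the twelve basis elements and check that the result is the prescribed multiple of $v_{s,a,b}$.

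First I would record, for part (i), the coefficient of each of the twelve basis monomials $m_4(s),\dots,1$ in $g_{a,4,s}$ (read off directly from Lemma \ref{gl2}(1)) and in $f_{a,4,s,2,0}$ (obtained by multiplying the bracket in \eqref{inbase} for $f_{a,4,s,2,0}$ by $1/M$ with $M=360$ and the $a_j$ of Lemma \ref{gl1}(i)). One must be a little careful that the two expressions use slightly different normalizations — Lemma \ref{gl2}(1) has an overall $\frac{5}{1728}$ while Lemma \ref{gl1}(i) has $\frac{1}{360}$ — so the clean move is to bring everything over the common denominator $4320=\mathrm{lcm}$-type factor appearing in the claim and then compute the integer coefficient of each basis monomial in $4320\,(g_{a,4,s}-f_{a,4,s,2,0})/m_{1^s}(s)$. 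The assertion is then that this equals the coefficient of the same monomial in $m_{1^s}(s)\,v_{s,a,8}$ divided by $m_{1^s}(s)$, i.e. in $v_{s,a,8}$ itself; inspecting the definition of $v_{s,a,b}$ one sees it only involves $m_4(s)$, $m_{22}(s)$, $m_2(s)$ and the constant term (as polynomials in $a,s$), so the content of the identity is that the coefficients of $m_{31}(s), m_{211}(s), m_{1111}(s), m_3(s), m_{21}(s), m_{111}(s), m_{11}(s), m_1(s)$ all cancel in the difference, while the surviving four coefficients match $8$ (resp. $9$ in part (ii)), $10$, $50a^2-10s-50$, and the stated constant. Part (ii) is handled identically, replacing $g_{a,4,s}$ by $\chi'_s$ (Lemma \ref{gl2}(6), denominator $768$), $f_{a,4,s,2,0}$ by $f_{a,4,s,3,0}$ (Lemma \ref{gl1}(ii), $M=1920$), and the clearing factor $4320$ by $3840$.

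The verification of each of the twelve coefficient equalities is a polynomial identity in the two variables $a$ and $s$ of total degree at most $4$, so it is entirely mechanical; as the appendix already notes, these are exactly the kind of computations carried out in Mathematica in \cite{lr3}, and I would cite that for the routine arithmetic rather than reproduce it. The only genuine subtlety — and the place where an error would most likely creep in — is bookkeeping the normalizations consistently across Lemmas \ref{gl1} and \ref{gl2}: the bracketed polynomials there are written with different leading denominators, and the coefficients $a_6,\dots,a_{12}$ in Lemma \ref{gl1} already absorb factors like $75/2$, so one must be scrupulous when putting the two sides over the single denominator $4320$ (resp. $3840$) demanded by the statement. Once the normalizations are aligned, the cancellations in the eight "vanishing" coefficients and the explicit form of the four surviving ones follow by direct expansion, completing the proof of (i) and (ii).
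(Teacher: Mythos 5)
Your proposal is correct and is exactly the paper's argument: the proof given there is simply ``Immediate from Lemmas \ref{gl1} and \ref{gl2}'', i.e.\ one expresses $g_{a,4,s}$, $\chi'_s$ (Lemma \ref{gl2}) and $f_{a,4,s,2,0}$, $f_{a,4,s,3,0}$ (Lemma \ref{gl1}) in the basis \eqref{base}, clears the prefactors to the common denominators $4320$ and $3840$, and checks coefficientwise that the difference is the stated multiple of $v_{s,a,b}$ (e.g.\ for $m_4(s)$ in (i): $\tfrac{5\cdot 64}{1728}-\tfrac{66}{360}=\tfrac{1}{540}=\tfrac{8}{4320}$). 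Your attention to the differing normalizations is exactly the right caution, and deferring the routine arithmetic to the Mathematica files \cite{lr3} matches the paper's practice.
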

\begin{proof}
Immediate from Lemmas \ref{gl1} and \ref{gl2}.
\end{proof}

We will also need the following crude estimate:

\begin{lemma}
\label{cg}
Let $b \in \{8,9\}$, let $a \ge 2$ and let $s \ge 2$ be integers. Then for all integers $d_i \ge 1, 1 \le i \le s$ we have that $v_{s,a,b}(d_1,\ldots, d_s) > 0$.
\end{lemma}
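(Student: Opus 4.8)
The plan is to regard $v_{s,a,b}$ as a real-valued function of $(d_1,\dots,d_s)\in[1,\infty)^s$ and to show that it is strictly increasing in each variable separately; granting this, applying the one-variable monotonicity coordinate by coordinate gives $v_{s,a,b}(d_1,\dots,d_s)\ge v_{s,a,b}(1,\dots,1)$, and it remains only to check that this minimal value is positive.

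First I would rewrite $v_{s,a,b}=b\,m_4(s)+10\,m_{22}(s)+(50a^2-10s-50)\,m_2(s)+C$, where $C=C(a,s,b)$ collects all the terms not involving the $d_i$. Using the identity $m_{22}(s)=\frac12\bigl(m_2(s)^2-m_4(s)\bigr)$ this becomes $v_{s,a,b}=(b-5)\,m_4(s)+5\,m_2(s)^2+(50a^2-10s-50)\,m_2(s)+C$, and a direct differentiation gives
$$\frac{\partial v_{s,a,b}}{\partial d_i}=2d_i\Bigl[(2b-10)\,d_i^2+10\,m_2(s)+50a^2-10s-50\Bigr].$$
Since $d_i\ge 1$ we have $(2b-10)d_i^2\ge 2b-10$ (note $b\ge 8$) and $m_2(s)=\sum_k d_k^2\ge s$, so the bracket is at least $(2b-10)+10s+50a^2-10s-50=2b+50a^2-60$, which for $b\in\{8,9\}$ and $a\ge 2$ is at least $16+200-60=156>0$. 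Hence $\partial v_{s,a,b}/\partial d_i>0$ on all of $[1,\infty)^s$, so $v_{s,a,b}(d_1,\dots,d_s)\ge v_{s,a,b}(1,\dots,1)$ for all real $d_i\ge 1$, in particular for all integers $d_i\ge 1$.

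Finally I would evaluate at $d_1=\cdots=d_s=1$: there $m_4(s)=m_2(s)=s$ and $m_{22}(s)=\binom{s}{2}$, the $s$-dependent terms cancel, and one is left with $v_{s,a,b}(1,\dots,1)=(100+5b)a^4-250a^2+(150-5b)$, i.e. $10(a^2-1)(14a^2-11)$ when $b=8$ and $5(a^2-1)(29a^2-21)$ when $b=9$. For $a\ge 2$ we have $a^2-1\ge 3$, $14a^2-11\ge 45$ and $29a^2-21\ge 95$, so the value is strictly positive in both cases, completing the proof. There is no serious obstacle here — this really is a crude estimate — the one point to be careful about is the sign of $\partial v_{s,a,b}/\partial d_i$, which is precisely where the hypothesis $a\ge 2$ is used.
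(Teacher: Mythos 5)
Your proof is correct, and it takes a genuinely different route from the paper's. You eliminate the $d_i$'s first: using $m_{22}(s)=\tfrac12\bigl(m_2(s)^2-m_4(s)\bigr)$ you show $\partial v_{s,a,b}/\partial d_i=2d_i\bigl[(2b-10)d_i^2+10m_2(s)+50a^2-10s-50\bigr]>0$ on $[1,\infty)^s$, reduce to the all-ones point, and factor $v_{s,a,b}(1,\ldots,1)=(100+5b)a^4-250a^2+150-5b$ as $10(a^2-1)(14a^2-11)$ resp.\ $5(a^2-1)(29a^2-21)$; all of these computations check out. The paper instead eliminates $a$ first: it sets up the telescoping identity $v_{s,a,b}=v_{s,a-1,b}+5(2a-1)\bigl[10(m_2(s)-s)+b(2a^2-2a+1)+10(4a^2-4a-3)\bigr]$ and inducts on $a$, with the base case $a=1$ handled by quoting the earlier estimate $q_{s,b}\ge 0$ from \cite[Lemma A.10]{lr2}. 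Your argument has the advantage of being entirely self-contained (no appeal to the external lemma) and of making transparent exactly where positivity can fail: the minimum over the domain is attained at $(1,\ldots,1)$ and vanishes precisely when $a=1$. One small inaccuracy in your closing remark: the hypothesis $a\ge 2$ is not really what makes the partial derivative positive --- your bracket is already $\ge 2b-10>0$ for $a=1$ since $10m_2(s)\ge 10s$ --- rather, $a\ge 2$ is needed so that the minimal value $v_{s,a,b}(1,\ldots,1)$, which is divisible by $a^2-1$, is strictly positive. This does not affect the validity of the proof.
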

\begin{proof}
Note that 
$$v_{s,1,b}= bm_4(s)+10m_{22}(s)-10sm_2(s)+s(5s-b+5).$$
In the notation in \cite[Lemma A.10]{lr2} we have that $v_{s,1,b}=q_{s,b}$ and it follows by \cite[Lemma A.10]{lr2} that $v_{s,1,b}(d_1,\ldots,d_s)> 0$ if $\prod_{i=1}^s d_i \ge 2$. On the other hand, if $\prod_{i=1}^s d_i =1$, then one easily checks that $v_{s,1,b}(1,\ldots,1)=0$. Thus 
\begin{equation}
\label{ult1}
v_{s,1,b}(d_1,\ldots,d_s) \ge 0.
\end{equation}
Now observe that
\begin{equation}
\label{ult}
v_{s,a,b}(d_1,\ldots,d_s)=v_{s,a-1,b}(d_1,\ldots,d_s)+5(2a-1)[10(m_2(s)(d_1,\ldots,d_s)-s)+b(2a^2-2a+1)+10(4a^2- 4a-3)].
\end{equation}
Finally, we proceed by induction on $a$. If $a=2$, \eqref{ult} and \eqref{ult1} give
$$v_{s,2,b}(d_1,\ldots,d_s)=v_{s,1,b}(d_1,\ldots,d_s)+75[2(m_2(s)(d_1,\ldots,d_s)-s)+b+10]>0.$$
If $a \ge 3$ we have by induction that $v_{s,a-1,b}(d_1,\ldots,d_s)>0$ and therefore also $v_{s,a,b}(d_1,\ldots, d_s) > 0$ by \eqref{ult}.
\end{proof}


\begin{thebibliography}{LR2} 

\bibitem[B]{b} A.~Beauville.
\textit{An introduction to Ulrich bundles}. 
Eur. J. Math. \textbf{4} (2018), no.~1, 26-36.


\bibitem[CG]{cg} E.~Coskun, O.~Genc.
\textit{Ulrich bundles on Veronese surfaces}.
Proc. Amer. Math. Soc. \textbf{145} (2017), no.~11, 4687-4701.

\bibitem[CMR1]{cmr1} L.~Costa, R.-M.~Mir\'o-Roig.
\textit{Ulrich bundles on Veronese surfaces}.
In: Singularities, algebraic geometry, commutative algebra, and related topics, 375-381.
Springer, Cham, 2018.

\bibitem[CMR2]{cmr2} L.~Costa, R.-M.~Mir\'o-Roig.
\textit{Instanton bundles vs Ulrich bundles on projective spaces}.
Beitr. Algebra Geom. \textbf{62} (2021), no.~2, 429-439.

\bibitem[ES]{es} D.~Eisenbud, F.-O.~Schreyer.
\textit{Resultants and Chow forms via exterior syzygies}. 
J. Amer. Math. Soc. \textbf{16} (2003), no.~3, 537-579.

\bibitem[FP]{fp} D.~Faenzi, V.~Pretti.
\textit{Ulrich ranks of Veronese varieties and equivariant instantons}. 
Preprint 2024, arXiv:2405.12574.


\bibitem[H]{h} R.~Hartshorne.
\textit{Varieties of small codimension in projective space}. 
Bull. Amer. Math. Soc. \textbf{80} (1974), 1017-1032. 

\bibitem[L]{lo} A.~F.~Lopez. 
\textit{On the positivity of the first Chern class of an Ulrich vector bundle}.
Commun. Contemp. Math. \textbf{24} (2022), no.~9, Paper No.~2150071, 22 pp.

\bibitem[LR1]{lr1} A.~F.~Lopez, D.~Raychaudhury.
\textit{On varieties with Ulrich twisted tangent bundles}.
Preprint 2023, arXiv:2301.03104.

\bibitem[LR2]{lr2} A.~F.~Lopez, D.~Raychaudhury.
\textit{Ulrich subvarieties and the non-existence of low rank Ulrich bundles on complete intersections}.
Preprint 2024, arXiv:2405.01154.

\bibitem[S]{sp} J.~G.~Spandaw.
\textit{Noether-Lefschetz problems for vector bundles}. 
Math. Nachr. \textbf{169} (1994), 287-308.

\end{thebibliography}

\begin{thebibliography}{Eg}

\bibitem[Eg]{eg} E.~S.~Egge.
\textit{An introduction to symmetric functions and their combinatorics}. 
Stud. Math. Libr.,  \textbf{91}. American Mathematical Society, Providence, RI, 2019, xiii+342 pp.

\bibitem[LR]{lr3} Mathematica codes available at
\url{http://ricerca.mat.uniroma3.it/users/lopez/Mathematica-for-Veronese.pdf}
\url{https://rcdeba.github.io/research.html}.

\end{thebibliography}
\end{document}